\author{Alan Veliz-Cuba\affiliationmark{1}
  \and Lauren Geiser\affiliationmark{1}
}
\title[AND-OR Networks with Chain Topology]{The Number of Fixed Points of 
AND-OR Networks with Chain Topology}
\affiliation{
  University of Dayton, USA
}
\keywords{Boolean networks, fixed points, AND-OR networks, one-dimensional cellular automata}
\newtheorem{theorem}{Theorem}[section]   
\newtheorem{corollary}[theorem]{Corollary}     
\newtheorem{lemma}[theorem]{Lemma}         
\newtheorem{proposition}[theorem]{Proposition}  
\theoremstyle{definition}
\theoremstyle{remark}
\newtheorem{example}[theorem]{Example}        
\numberwithin{equation}{section}     
\definecolor{darkgreen}{rgb}{0,0.6,0}
\begin{document}
\publicationdetails{VOL}{2016}{ISS}{NUM}{SUBM}
\maketitle
\begin{abstract}
AND-OR networks are Boolean networks where each coordinate function is either the AND or OR logical operator. We study the number of fixed points of these Boolean networks in the case that they have a wiring diagram with chain topology. We find closed formulas for subclasses of these networks and recursive formulas in the general case. Our results allow for an effective computation of the number of fixed points in the case that the topology of the Boolean network is an open chain (finite or infinite) or a closed chain. 
\end{abstract}

\section{Introduction}


Boolean networks, $f:\{0,1\}^n\rightarrow\{0,1\}^n$, have been used to study problems arising from areas such as mathematics, computer science, and biology \citep{Akutsu,AO,mendozamethod,CBN,Velizlacop}. A particular problem of interest is counting the number of fixed points ($x$ such that $f(x)=x$). To simplify this problem one can restrict the class of Boolean functions or the topology of the network \citep{AFK, ADG, Jarrah2007167, Aracena_FB_BRN,  Murrugarra201166,  SCBN,CBN,bnandnot,veliz2014piecewise, veliz2015dimension}, which in some cases allows to find effective algorithms or formulas in closed form.

In this manuscript we focus on the number of fixed points of AND-OR networks (each Boolean function is either the AND or the OR operator) that have open or closed chain topology. We first consider the case of finite open chain topology and find a recursive formula (Theorem \ref{thm:main}) and sharp lower and upper bounds. We then consider the case of infinite and closed chain topology, and show how they can be reduced to the case of finite open chain topology (Theorems \ref{thm:inf} \ref{thm:closed}). 

\section{Open Chain}
\label{sec:openchain}

Let $f=(f_1,\ldots,f_n):\{0,1\}^n\rightarrow \{0,1\}^n$ with $n\geq
2$ be an AND-OR network such that its wiring diagram is a
chain, Fig \ref{fig-chain}. That is, we consider Boolean networks of the form:
\begin{displaymath}
\begin{array}{lllllll}
f_1=x_2, & 
f_2=x_1\diamondsuit_2 x_3, &
f_3=x_2\diamondsuit_3 x_4, &
\ldots & , &
f_{n-1}=x_{n-2}\diamondsuit_{n-1} x_n, &
f_n=x_{n-1},
\end{array}
\end{displaymath}

where $\diamondsuit_i$ is the AND ($\wedge$) or the OR ($\vee$) operator.
\begin{figure}
\centerline{ \hbox{ \framebox{
\includegraphics[width=0.6\textwidth]{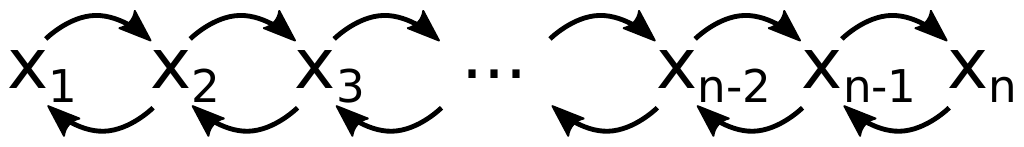}}}
 }\caption{Wiring diagram with open chain topology.}
 \label{fig-chain}
\end{figure}

Because this family of Boolean networks is completely determined by the sequence of logical operators $\diamondsuit_2,\diamondsuit_3,\ldots,\diamondsuit_{n-1}$, we can use this sequence to represent the network. Furthermore, consecutive occurrences of the same logical operator can be denoted as $\wedge^k$ or $\vee^k$.

We are interested in the number of fixed points of such Boolean networks. For simplicity we denote the elements of $\{0,1\}^n$ as binary strings (omitting parentheses). Also, we will use the notation $\textbf{0}=00\cdots 0$ and $\textbf{1}=11\cdots 1$, where the length of the strings will be clear from the context. Note that $\textbf{0}$ and $\textbf{1}$ are fixed points of all AND-OR networks with chain topology.

\begin{example}\label{eg:AOnet}
Our running example will be the AND-OR network 
\begin{displaymath}
\begin{array}{llllll}
f_1=x_2, & 
f_2=x_1\wedge x_3, &
f_3=x_2\wedge x_4, &
f_4=x_3\vee x_5, &
f_5=x_4\wedge x_6, &
f_6=x_5\vee x_7,\\
f_7=x_6\vee x_8, &
f_8=x_7\vee x_9, &
f_9=x_8\wedge x_{10}, &
f_{10}=x_9\wedge x_{11}, &
f_{11}=x_{10}\vee x_{12}, &
f_{12}=x_{11}.
\end{array}
\end{displaymath}
This network can be represented by the sequence of operators 
$\wedge\!\wedge\!\vee\!\wedge\!\vee\!\vee\!\vee\!\wedge\!\wedge\vee$. We can further simplify this representation to $\wedge^2\!\vee\!\wedge\!\vee^3\!\wedge^2\vee$. This AND-OR network has 13 fixed points listed in Table \ref{tab:AOnet} (first column).  
\end{example}

The next lemma states that the number of fixed points depends only on the powers of the operators. Since we do not know which operator is last ($\wedge$ or $\vee$), we will simply use ellipses without explicitly writing the last operator.

\begin{lemma}
The AND-OR networks $f=\wedge^{k_1}\vee^{k_2}\wedge^{k_3}\cdots$ and $g=\vee^{k_1}\wedge^{k_2}\vee^{k_3}\cdots$ have the same number of fixed points.
\end{lemma}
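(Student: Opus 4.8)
The plan is to exhibit an explicit bijection between the fixed points of $f=\wedge^{k_1}\vee^{k_2}\wedge^{k_3}\cdots$ and those of $g=\vee^{k_1}\wedge^{k_2}\vee^{k_3}\cdots$, namely the coordinatewise complement map $\Phi:\{0,1\}^n\to\{0,1\}^n$ given by $\Phi(x)_i = \overline{x_i} = 1-x_i$. The key observation is the De Morgan duality: $\overline{a\wedge b} = \overline{a}\vee\overline{b}$ and $\overline{a\vee b} = \overline{a}\wedge\overline{b}$. So if $f$ uses the operator $\diamondsuit_i$ in coordinate $i$ (for $2\le i\le n-1$), then $g$ uses the dual operator $\diamondsuit_i^{*}$ in that same coordinate, where $\wedge^{*}=\vee$ and $\vee^{*}=\wedge$; and since $f$ and $g$ have the same chain wiring diagram, coordinate $i$ of both networks depends on exactly $x_{i-1}$ and $x_{i+1}$ (or on the single neighbor for $i=1,n$).

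The main steps are: (1) verify that $g_i(\Phi(x)) = \overline{f_i(x)}$ for every $i$ and every $x$, using De Morgan on the interior coordinates and the trivial identity $\overline{x_j}=\overline{x_j}$ on the endpoint coordinates $f_1=x_2$, $f_n=x_{n-1}$; this says $g\circ\Phi = \Phi\circ f$, i.e. $\Phi$ conjugates $f$ to $g$. (2) Conclude that $x$ is a fixed point of $f$ iff $f(x)=x$ iff $\Phi(f(x))=\Phi(x)$ iff $g(\Phi(x))=\Phi(x)$ iff $\Phi(x)$ is a fixed point of $g$. (3) Since $\Phi$ is an involution, hence a bijection on $\{0,1\}^n$, it restricts to a bijection between $\mathrm{Fix}(f)$ and $\mathrm{Fix}(g)$, so the two sets have the same cardinality.

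I do not expect a genuine obstacle here; the argument is a routine conjugation-by-complementation. The only point requiring a little care is the bookkeeping at the two endpoints of the chain: the functions $f_1$ and $f_n$ are not built from a binary operator, so one must note separately that complementation commutes with them trivially (they are just projections), and that the statement of the lemma compares networks on the same index set $\{1,\dots,n\}$ with the same underlying chain, so that the operator in position $i$ of $g$ is literally the De Morgan dual of the operator in position $i$ of $f$. Once that alignment is made explicit, step (1) is a one-line check per coordinate.
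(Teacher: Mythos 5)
Your proposal is correct and uses essentially the same argument as the paper: conjugation by the coordinatewise complement map together with De Morgan's laws, yielding a bijection between the fixed-point sets. The paper writes the intertwining relation as $f(\phi(x))=\phi(g(x))$ rather than $g\circ\Phi=\Phi\circ f$, but this is the same identity with the dual roles of $f$ and $g$ swapped.
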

\begin{proof}
Consider $\phi:\{0,1\}^n\rightarrow \{0,1\}^n$ given by $\phi(x_1,\ldots,x_n)=(\neg x_1,\ldots,\neg x_n)$, where $\neg$ is the logical operator NOT. Using the fact that $\neg (p\wedge q)=\neg p \vee \neg q$ and $\neg (p\vee q)=\neg p \wedge \neg q$, it follows that $f(\phi(x))=\phi(g(x))$. Then, $x$ will be a fixed point of $g$ if and only if $\phi(x)$ is a fixed point of $f$. So, $\phi$ is a bijection between the fixed points of $g$ and $f$.
\end{proof}

Because we are interested in the number of fixed points, we will simply use $(k_1,k_2,\ldots,k_m)$ to refer to a network. For instance, the AND-OR network seen in Example \ref{eg:AOnet} can be represented simply by $(2,1,1,3,2,1)$. We denote the number of fixed points by $\mathcal{F}(k_1,k_2,\ldots,k_m)$. 

The following lemma states that consecutive variables that have the same logical operator must be equal.

\begin{lemma}\label{lemma:01}
Consider an AND-OR network $f$ represented by $(k_1,k_2,\ldots,k_m)$. Denote an element of the domain of $f$ by $\textbf{x}=(\textbf{x}^1,\textbf{x}^2,\ldots,\textbf{x}^m)$, where $\textbf{x}^1\in\{0,1\}^{k_1+1}$, $\textbf{x}^m\in\{0,1\}^{k_m+1}$, and $\textbf{x}^i\in\{0,1\}^{k_i}$ for $i=2,\ldots,m-1$. If $\textbf{x}$ is a fixed point of $f$, then $\textbf{x}^i=\textbf{0}$ or $\textbf{x}^i=\textbf{1}$ for $i=1,\ldots,m$.
\end{lemma}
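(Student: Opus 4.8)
The plan is to read each fixed-point equation as a monotonicity constraint along the chain, and then to use the fact that inside a maximal run of identical operators the coordinates are squeezed to be simultaneously nondecreasing and nonincreasing, hence constant.

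First I would rewrite the equations for a fixed point $\textbf{x}$. For an interior index $i$ (so $2\le i\le n-1$): if $\diamondsuit_i=\wedge$ then $x_i=x_{i-1}\wedge x_{i+1}$ forces $x_i\le x_{i-1}$ and $x_i\le x_{i+1}$, and if $\diamondsuit_i=\vee$ then $x_i=x_{i-1}\vee x_{i+1}$ forces $x_i\ge x_{i-1}$ and $x_i\ge x_{i+1}$; the two endpoint equations give $x_1=x_2$ and $x_{n-1}=x_n$. Now take a maximal block of equal operators, and suppose they appear in $f_a,f_{a+1},\ldots,f_b$ (note $2\le a\le b\le n-1$) and are all $\wedge$, the $\vee$ case being symmetric after exchanging $\le$ with $\ge$. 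The equations $f_a,\ldots,f_{b-1}$ yield $x_a\le x_{a+1}\le\cdots\le x_b$, while $f_{a+1},\ldots,f_b$ yield $x_a\ge x_{a+1}\ge\cdots\ge x_b$; hence $x_a=x_{a+1}=\cdots=x_b$, i.e.\ this run of coordinates is constant.

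It then remains to line these runs up with the blocks $\textbf{x}^i$. Writing $s_j=k_1+\cdots+k_j$, the $j$-th operator run occupies the equations $f_{s_{j-1}+2},\ldots,f_{s_j+1}$, so by the previous step the coordinates $x_{s_{j-1}+2},\ldots,x_{s_j+1}$ are all equal; since $k_1,k_m\ge 1$, all these indices lie in $\{2,\ldots,n-1\}$ as required. For $2\le i\le m-1$ this block of coordinates is exactly $\textbf{x}^i$, so $\textbf{x}^i\in\{\textbf{0},\textbf{1}\}$. For $\textbf{x}^1$, the first run forces $x_2,\ldots,x_{k_1+1}$ to a common value and $x_1=x_2$ carries it to $x_1$; symmetrically, for $\textbf{x}^m$ the last run handles $x_{n-k_m},\ldots,x_{n-1}$ and $x_{n-1}=x_n$ handles $x_n$. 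Thus every $\textbf{x}^i$ equals $\textbf{0}$ or $\textbf{1}$. I expect the monotonicity squeeze to be the clean, robust core of the argument; the only place that needs care is the index bookkeeping, namely checking that the coordinate window of the $j$-th operator run is precisely $\textbf{x}^j$ for interior $j$, and differs from $\textbf{x}^1$ and $\textbf{x}^m$ only in the single boundary coordinate governed by $f_1$ or $f_n$.
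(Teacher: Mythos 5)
Your proof is correct and takes essentially the same route as the paper: your monotonicity squeeze is a telescoped restatement of the paper's observation that $q=p\wedge r$ and $r=q\wedge s$ force $q=r$ (likewise for $\vee$), and you handle the endpoints via $x_1=x_2$ and $x_{n-1}=x_n$ just as the paper does. The only difference is that you make the index bookkeeping between operator runs and the blocks $\textbf{x}^i$ explicit, which the paper leaves implicit.
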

\begin{proof}
Let $\textbf{x}$ be a fixed point of $f$. We use $(\textbf{x}^i)_j$ to denote the $j$-th coordinate of $\textbf{x}^i$. Note that $(\textbf{x}^1)_1=(\textbf{x}^1)_2$ and $(\textbf{x}^m)_{k_m}=(\textbf{x}^m)_{k_m+1}$ by definition of $f$ (the first and last coordinate functions of $f$ depend on single variables).

Now, the rest of the proof follows from the fact that if $q=p\wedge r$ and $r=q\wedge s$ or if $q=p\vee r$ and $r=q\vee s$, then $q=r$. This implies that consecutive variables, $(\textbf{x}^i)_j$ and $(\textbf{x}^i)_{j+1}$, that have the same logical operators must be the same.
\end{proof}

The next proposition states that the numbers $k_i$ in $\mathcal{F}(k_1,\ldots,k_m)$ can be assumed to be at most 2 for $2\leq i\leq m-1$, and 1 for $k_1$ and $k_m$. For example, this will imply that $\mathcal{F}(2,1,1,3, 2,1)=\mathcal{F}(1,1,1,2, 2,1)$ and $\mathcal{F}(2,5,3,1,4,3)=\mathcal{F}(1,2,2,1,2,1)$. 

\textit{Example} \ref{eg:AOnet} (cont.) We highlight the structure of the fixed points of 
$\wedge^2\!\vee\!\wedge\!\vee^3\!\wedge^2\vee$ in Table \ref{tab:AOnet} (second column).

\begin{proposition}\label{prop:reduction}
 $\mathcal{F}(k_1,k_2,\ldots,k_{m-1},k_m)=\mathcal{F}(1,\min\{k_2,2\},\ldots,\min\{k_{m-1},2\},1)$ for all positive integers $k_i$.
\end{proposition}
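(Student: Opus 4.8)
The plan is to convert the fixed-point count into a count of admissible binary strings of length $m$ (one bit per block), and then to observe that the admissibility conditions involve the interior $k_i$ only through $\min\{k_i,2\}$ and do not involve $k_1$ or $k_m$ at all.

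Since the count $\mathcal{F}$ does not depend on which of the two representatives of the network one chooses, I would fix the representative in which block $i$ uses the operator $\wedge$ for odd $i$ and $\vee$ for even $i$, and write $\circ_i\in\{\wedge,\vee\}$ for the operator of block $i$. By Lemma~\ref{lemma:01}, every fixed point $\textbf{x}=(\textbf{x}^1,\ldots,\textbf{x}^m)$ has each $\textbf{x}^i$ equal to $\textbf{0}$ or $\textbf{1}$, so it is encoded by the string $s=(s_1,\ldots,s_m)\in\{0,1\}^m$ whose $i$-th bit is the common value of $\textbf{x}^i$; conversely each $s$ determines a unique point $\textbf{x}(s)$ with constant blocks. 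Hence $\mathcal{F}(k_1,\ldots,k_m)$ equals the number of strings $s$ for which $\textbf{x}(s)$ is fixed.

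Next I would write the equations $f_j(\textbf{x}(s))=x_j$ and discard the automatic ones. Whenever $x_j$ is a coordinate of $\textbf{x}^i$ both of whose neighbors $x_{j-1},x_{j+1}$ also lie in $\textbf{x}^i$, the equation is $s_i=s_i\circ_i s_i$, which always holds; so the only nontrivial constraints come from the first or last coordinate of a block, where a neighbor lies in an adjacent block. This produces exactly: (i) for an interior block $i$ ($2\le i\le m-1$) with $k_i\ge 2$, the constraints $s_i=s_{i-1}\circ_i s_i$ and $s_i=s_i\circ_i s_{i+1}$; (ii) for an interior block $i$ with $k_i=1$, the single constraint $s_i=s_{i-1}\circ_i s_{i+1}$; (iii) for block~$1$, the constraint $s_1=s_1\circ_1 s_2$; (iv) for block~$m$, the constraint $s_m=s_{m-1}\circ_m s_m$. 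From here the conclusion is immediate: the set of admissible $s$ cut out by (i)--(iv) depends on the $k_i$ only through whether each interior $k_i$ equals $1$ or is at least $2$, and not at all on $k_1$ or $k_m$. Replacing $k_1$ and $k_m$ by $1$ and each interior $k_i$ by $\min\{k_i,2\}$ therefore changes neither (i)--(iv) nor the number of admissible $s$, which is precisely the asserted identity.

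The step requiring care is the boundary bookkeeping behind (iii) and (iv). Because $\textbf{x}^1$ and $\textbf{x}^m$ have lengths $k_1+1$ and $k_m+1$, each contains at least two coordinates, so even when $k_1=1$ (or $k_m=1$) the relevant end coordinate of that block still has an in-block neighbor; hence block~$1$ and block~$m$ each contribute only a two-variable constraint (items (iii) and (iv)) rather than a three-variable equality, in contrast with a length-$1$ interior block, which genuinely forces the stronger equality $s_i=s_{i-1}\circ_i s_{i+1}$ in (ii). Carefully tracking which of $x_{j-1},x_{j+1}$ is in-block and which is in the next block is the only delicate point; the remainder uses only the identity $p\circ p=p$ valid for $\circ\in\{\wedge,\vee\}$.
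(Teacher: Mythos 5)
Your proof is correct, and it is organized differently from the paper's. The paper proves the identity by an iterated, block-by-block bijection: using Lemma~\ref{lemma:01} it truncates the first block of a fixed point of $f=\wedge^{k_1}\vee^{k_2}\cdots$ to length $2$ to get a fixed point of $g=\wedge\vee^{k_2}\cdots$, checks the converse construction, and then repeats the same argument to replace each interior $k_i\geq 2$ by $2$; the verifications that the truncated/extended points are again fixed points are left as ``it can be checked.'' You instead make a single uniform argument: Lemma~\ref{lemma:01} lets you encode every fixed point by a string $s\in\{0,1\}^m$ of block values, and you then write out exactly which equations $f_j(\textbf{x}(s))=x_j$ are nontrivial, obtaining a local constraint system on $s$ that involves the interior $k_i$ only through whether $k_i=1$ or $k_i\geq 2$ and does not involve $k_1,k_m$ at all (your boundary bookkeeping for blocks $1$ and $m$, which have $k_1+1$ and $k_m+1$ coordinates, is the right point to be careful about and is handled correctly). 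Both arguments rest on Lemma~\ref{lemma:01}, but yours replaces the paper's chain of explicit bijections between different networks by the observation that all networks with the same reduced data yield literally the same admissibility conditions on $s$; a side benefit is that your explicit constraint system (iii)--(iv) and (i)--(ii) is essentially the case analysis that reappears in Proposition~\ref{prop:endpoint} and Theorem~\ref{thm:main}, so your route makes those later recursions more transparent, at the cost of being slightly less elementary to state than the paper's direct truncation maps.
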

\begin{proof}
 We will use the notation of Lemma~\ref{lemma:01}.
 
We first show that $f=\wedge^{k_1}\vee^{k_2}\wedge^{k_3}\cdots$ and $g=\wedge\vee^{k_2}\wedge^{k_3}\cdots$ have the same number of fixed points. 
Let $\textbf{x}=(\textbf{x}^1,\ldots,\textbf{x}^m)$ be a fixed point of $f$. Then, by Lemma~\ref{lemma:01} we have  $\textbf{x}^1=\textbf{0}$ or $\textbf{x}^1=\textbf{1}$. Consider $\textbf{y}=(\textbf{z},\textbf{x}^2,\ldots,\textbf{x}^m)$, where $\textbf{z}=((\textbf{x}^1)_1,(\textbf{x}^1)_2)$. It can be checked that $\textbf{y}$ is a fixed point of $g$. Now, if $\textbf{y}=(\textbf{z},\textbf{x}^2,\ldots,\textbf{x}^m)$ is a fixed point of $g$, Lemma~\ref{lemma:01} implies that $\textbf{z}=\textbf{0}$ or $\textbf{z}=\textbf{1}$. We define $\textbf{x}=(\textbf{x}^1,\ldots,\textbf{x}^m)$ in the domain of $f$, where $\textbf{x}^1=\textbf{0}$ if  $\textbf{z}=\textbf{0}$ and $\textbf{x}^1=\textbf{1}$ if  $\textbf{z}=\textbf{1}$. Then, it can be checked that $\textbf{x}$ is a fixed point of $f$. This shows that $\mathcal{F}(k_1,k_2,\ldots,k_{m-1},k_m)=\mathcal{F}(1,k_2,\ldots,k_{m-1},k_m)$, and similarly it can be shown that $\mathcal{F}(1,k_2,\ldots,k_{m-1},k_m)=\mathcal{F}(1,k_2,\ldots,k_{m-1},1)$.

We now show that for $k_2\geq 2$, $f=\wedge^{k_1}\vee^{k_2}\wedge^{k_3}\cdots$ and $g=\wedge^{k_1}\vee^2\wedge^{k_3}\cdots$ have the same number of fixed points. The general case is analogous. Let $\textbf{x}=(\textbf{x}^1,\textbf{x}^2,\ldots,\textbf{x}^m)$ be a fixed point of $f$. Then, by Lemma~\ref{lemma:01} we have  $\textbf{x}^2=\textbf{0}$ or $\textbf{x}^2=\textbf{1}$. Consider $\textbf{y}=(\textbf{x}^1,\textbf{z},\textbf{x}^3,\ldots,\textbf{x}^m)$, where $\textbf{z}=((\textbf{x}^2)_1,(\textbf{x}^2)_2)$. It can be checked that $\textbf{y}$ is a fixed point of $g$. Now, if $\textbf{y}=(\textbf{x}^1,\textbf{z},\textbf{x}^3,\ldots,\textbf{x}^m)$ is a fixed point of $g$, Lemma~\ref{lemma:01} implies that $\textbf{z}=\textbf{0}$ or $\textbf{z}=\textbf{1}$. We define $\textbf{x}=(\textbf{x}^1,\textbf{x}^2,\ldots,\textbf{x}^m)$ in the domain of $f$, where $\textbf{x}^1=\textbf{0}$ if  $\textbf{z}=\textbf{0}$ and $\textbf{x}^1=\textbf{1}$ if  $\textbf{z}=\textbf{1}$. Then, it can be checked that $\textbf{x}$ is a fixed point of $f$. This shows that $\mathcal{F}(k_1,k_2,\ldots,k_{m-1},k_m)=\mathcal{F}(k_1,2,k_3,\ldots,k_{m-1},k_m)$ for $k_2\geq 2$.
\end{proof}

\textit{Example} \ref{eg:AOnet} (cont.) Proposition \ref{prop:reduction} guarantees that $\wedge^2\!\vee\!\wedge\!\vee^3\!\wedge^2\vee$ and $\wedge\!\vee\!\wedge\!\vee^2\!\wedge^2\vee$ have the same number of fixed points. We can consider the second AND-OR network as a ``reduced'' version of the original AND-OR network. This is illustrated in Table \ref{tab:AOnet} (third column).

\begin{table}
\begin{tabular}{ccc}
\hline
Fixed points & Structure from Lemma \ref{lemma:01} & ``Reduced'' system (Proposition \ref{prop:reduction}) \\
\hline
000000000000 & 000 0 0 000 00 00 & 00 0 0 00 00 00 \\
000000000011 & 000 0 0 000 00 11 & 00 0 0 00 00 11\\
000001110000 & 000 0 0 111 00 00 & 00 0 0 11 00 00\\
000001111111 & 000 0 0 111 11 11 & 00 0 0 11 11 11\\
000001110011 & 000 0 0 111 00 11 & 00 0 0 11 00 11 \\
000111110000 & 000 1 1 111 00 00 & 00 1 1 11 00 00 \\
000111110011 & 000 1 1 111 00 11 & 00 1 1 11 00 11 \\
000111111111 & 000 1 1 111 11 11 & 00 1 1 11 11 11 \\
111100000000 & 111 1 0 000 00 00 & 11 1 0 00 00 00 \\
111100000011 & 111 1 0 000 00 11 & 11 1 0 00 00 11 \\
111111110000 & 111 1 1 111 00 00 & 11 1 1 11 00 00 \\
111111110011 & 111 1 1 111 00 11 & 11 1 1 11 00 11 \\ 
111111111111 & 111 1 1 111 11 11 & 11 1 1 11 11 11 \\
\hline
\end{tabular}
\caption{Fixed points of the AND-OR network  $\wedge^2\!\vee\!\wedge\!\vee^3\!\wedge^2\vee$. First column: fixed points. Second column: fixed points with the structure given by Lemma \ref{lemma:01} highlighted. Third column: fixed points of reduced network, $\wedge^2\vee\wedge\vee^2\wedge^2\vee$, with the structure given by Lemma \ref{lemma:01}  highlighted.}
\label{tab:AOnet}
\end{table}

\begin{proposition}\label{prop:endpoint}
Let $r_1,\ldots,r_m$ in \{1,2\}, and $m\geq 2$. Then, we have the following  
\begin{displaymath}
\mathcal{F}(1,r_1,\ldots,r_m,1)  = 
\begin{cases}
\mathcal{F}(1,r_3,\ldots,r_{m},1) +
\mathcal{F}(r_3,\ldots,r_{m},1), 
& \text{for $r_1=1, r_2=1$}  \\
\mathcal{F}(2,r_3,\ldots,r_{m},1) +
\mathcal{F}(1,r_3,\ldots,r_{m},1), 
& \text{for $r_{1}=1, r_{2}=2$}  \\
\mathcal{F}(1,1,r_3,\ldots,r_{m},1) +
\mathcal{F}(r_3,\ldots,r_{m},1), 
& \text{for $r_{1}=2, r_{2}=1$}  \\
\mathcal{F}(1,2,r_3,\ldots,r_{m},1) +
\mathcal{F}(1,r_3,\ldots,r_{m},1), 
& \text{for $r_{1}=2, r_{2}=2$}  \\
\end{cases}
\end{displaymath}
\end{proposition}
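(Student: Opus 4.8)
The plan is to encode each fixed point by its vector of block values and then ``peel off'' the left end by conditioning on the value of the first block. By Lemma~\ref{lemma:01}, a fixed point $\textbf{x}=(\textbf{x}^1,\ldots,\textbf{x}^M)$, where $M=m+2$, is determined by the bits $b_1,\ldots,b_M$ recording whether each $\textbf{x}^i$ equals $\textbf{0}$ or $\textbf{1}$, so the first step is to rewrite the fixed-point equations as local conditions on $(b_1,\ldots,b_M)$. Using the bit-complement bijection (the lemma comparing $\wedge^{k_1}\vee^{k_2}\cdots$ with $\vee^{k_1}\wedge^{k_2}\cdots$) I may assume the first operator is $\wedge$. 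Inspecting the coordinate functions around each block boundary then gives: an endpoint block with operator $\wedge$ (resp.\ $\vee$) imposes $b\le b'$ (resp.\ $b\ge b'$), where $b'$ is the value of its unique neighbour; a middle block with a single variable and operator $\diamondsuit$ imposes $b=b_-\diamondsuit b_+$, where $b_\mp$ are the values of the two neighbours; and a middle block with two variables and operator $\wedge$ (resp.\ $\vee$) imposes $b\le b_-$ and $b\le b_+$ (resp.\ $b\ge b_-$ and $b\ge b_+$). Thus $\mathcal{F}(k_1,\ldots,k_M)$ is exactly the number of bit-vectors satisfying these conditions, and -- as already exploited in Proposition~\ref{prop:reduction} -- the sizes of the two endpoint blocks play no role, only their operators do. I would record this as a short preliminary observation.

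Now I would split the fixed points of $(1,r_1,\ldots,r_m,1)$ according to $b_1$. The first block $B_1$ (the leading ``$1$'') is a $\wedge$-endpoint, so its condition is $b_1\le b_2$. If $b_1=0$ this becomes vacuous and the outcome depends only on $B_2$: when $r_1=2$, $B_2$ is a two-variable $\vee$-block whose conditions collapse to $b_2\ge b_3$, so the surviving conditions on $(b_2,\ldots,b_M)$ are exactly those of the network in which $B_2$ is made a $\vee$-endpoint, giving $\mathcal{F}(2,r_2,\ldots,r_m,1)=\mathcal{F}(1,r_2,\ldots,r_m,1)$ by Proposition~\ref{prop:reduction}; when $r_1=1$, $B_2$ is a one-variable $\vee$-block with $b_2=b_1\vee b_3=b_3$, so $b_2$ is forced equal to $b_3$, and substituting this into the left-hand condition of the neighbouring $\wedge$-block $B_3$ turns it into the $\wedge$-endpoint condition, so the fixed points with $b_1=0$ are in bijection with those of the network on $B_3,\ldots,B_M$ with $B_3$ regarded as an endpoint, i.e.\ with $\mathcal{F}(r_2,r_3,\ldots,r_m,1)$. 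In each case this is the first summand in the corresponding case of the statement.

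If instead $b_1=1$, then $b_1\le b_2$ forces $b_2=1$, which makes every condition coming from $B_2$ automatic, and I would run the previous paragraph shifted one block to the right: $B_3$ is a $\wedge$-block whose left neighbour is pinned to $b_2=1$, which is the mirror image of the $b_1=0$ analysis under $\wedge\leftrightarrow\vee$, $0\leftrightarrow 1$. When $r_2=2$, $B_3$ becomes a $\wedge$-endpoint and one obtains $\mathcal{F}(2,r_3,\ldots,r_m,1)=\mathcal{F}(1,r_3,\ldots,r_m,1)$; when $r_2=1$, the condition $b_3=b_2\wedge b_4=b_4$ forces $b_3=b_4$, $B_4$ becomes a $\vee$-endpoint, and one obtains $\mathcal{F}(r_3,r_4,\ldots,r_m,1)$. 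These are the second summands. Summing the $b_1=0$ and $b_1=1$ contributions yields the four stated identities; degenerate small instances (e.g.\ $m=2$, where ``the network on $B_4,\ldots,B_M$'' may be a lone endpoint block, contributing $2$) are covered without change.

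The only genuine work is the first step -- verifying the boundary conditions from elementary identities such as $q=p\wedge r,\ r=q\wedge s\Rightarrow q=r$, the same ones used for Lemma~\ref{lemma:01} -- together with, in each subcase above, the bookkeeping of exactly which conditions become vacuous and which block is ``absorbed'' into its neighbour. The main obstacle is therefore organizational: matching the four $(r_1,r_2)$-patterns against the one-variable/two-variable subcases so that the absorbed blocks and the surviving endpoint land on the correct indices, and confirming that the relabelled network is precisely the one named on the right-hand side.
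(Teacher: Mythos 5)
Your proposal is correct and follows essentially the same route as the paper: you condition on the value of the first variable/block ($b_1=0$ versus $b_1=1$) and identify the surviving constraints with the fixed-point systems of the smaller open chains named in each case, exactly the paper's case split on $x_1$ combined with the block structure of Lemma~\ref{lemma:01}. Your block-level rewriting of the equations as order/equality conditions is just a tidier bookkeeping of the same reduction, with the added benefit of treating all four $(r_1,r_2)$ cases uniformly where the paper details only one.
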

\begin{proof}
We will use the notation of Lemma~\ref{lemma:01}.

If $r_1=1, r_2=1$, then we claim that any fixed point of $f=\wedge\vee\wedge\vee^{r_3}\wedge^{r_4}\cdots$ is of the form $\mathbf{x}=(\mathbf{x}^0,\mathbf{x}^1,\mathbf{x}^2,\ldots,\mathbf{x}^m,\mathbf{x}^{m+1})$ where either $\mathbf{x}^0=\mathbf{0}$ and $\mathbf{z}=(\mathbf{x}^1,\mathbf{x}^2,\ldots,\mathbf{x}^m,\mathbf{x}^{m+1})$ is a fixed point of $g=\wedge\vee^{r_3}\wedge^{r_4}\cdots$ or $\mathbf{x}^0=\mathbf{x}^1=\mathbf{1}$ and $\mathbf{z}=(\mathbf{x}^2,\ldots,\mathbf{x}^m,\mathbf{x}^{m+1})$ is a fixed point of $h=\vee^{r_3}\wedge^{r_4}\cdots$. Indeed, the system of Boolean equations for fixed points is 
\[
\begin{array}{lll}
x_1 & = & x_2 \\
x_2 & = & x_1\wedge x_3 \\
x_3 & = & x_2\vee x_4 \\
x_4 & = & x_3\wedge x_5 \\
x_5 & = & x_4\vee x_6 \\
    & \ \vdots & \\
x_n & = & x_{n-1}
\end{array}
\]
\begin{figure}[h]
\centerline{ \hbox{ \framebox{
\includegraphics[width=0.6\textwidth]{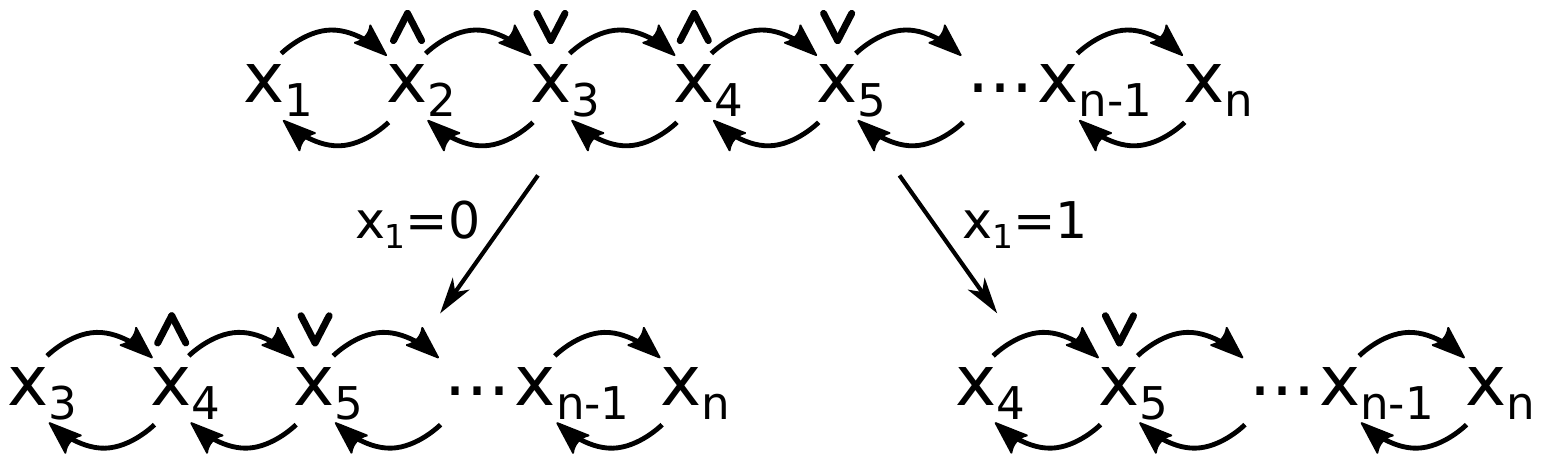}}}
 }\caption{Idea behind the proof of Proposition \ref{prop:endpoint} (logical operators are included for clarity). Considering the cases $x_1=0$ and $x_1=1$ yields systems of equations that correspond to smaller AND-OR networks.}
 \label{fig-chain_proof1}
\end{figure}

We divide this system of equations in the cases $x_1=0$ and $x_1=1$. Then, using the fact that $1=m\wedge n$ implies that $m=n=1$, that $0=m\vee n$ implies $m=n=0$, 
it follows that we obtain the two systems 

\[
\begin{array}{lll}
x_3 & = &  x_4 \\
x_4 & = & x_3\wedge x_5 \\
x_5 & = & x_4\vee x_6 \\
    & \ \vdots & \\
x_n & = & x_{n-1}
\end{array}
\]

and 

\[
\begin{array}{lll}
x_4 & = &  x_5 \\
x_5 & = & x_4\vee x_6 \\
    & \ \vdots & \\
x_n & = & x_{n-1},
\end{array}
\]
corresponding to the cases $x_1=0$ and $x_1=1$, respectively (see Fig.~\ref{fig-chain_proof1}). This means that the number of fixed points of $f$ is equal to the number of solutions of these two systems. Since the solutions of the first system are the fixed points of $g=\wedge\vee^{r_3}\wedge^{r_4}\cdots$ and the solutions of the second system are the fixed points of $h=\vee^{r_3}\wedge^{r_4}\cdots$, we obtain 
$\mathcal{F}(1,1,1,r_3\ldots,r_m,1)  = 
\mathcal{F}(1,r_3,\ldots,r_{m},1) +
\mathcal{F}(r_3,\ldots,r_{m},1)
$.

The proof for the other three cases is similar.
\end{proof}

By convention, we denote the AND-OR network $f(x_1,x_2)=(x_2,x_1)$ by an empty sequence, $()$. We also use the convention $\mathcal{F}(0,k_1,\ldots,k_m,0)=\mathcal{F}(k_1,\ldots,k_m,0)=\mathcal{F}(0,k_1,\ldots,k_m)=\mathcal{F}(k_1,\ldots,k_m)$ which will simplify the formulation of upcoming results.

\begin{theorem}\label{thm:main}
With the convention above, we have that for $m\geq 3$ and $k_i\geq 1$
\[
\mathcal{F}(k_1,\ldots,k_m) = \mathcal{F}(k_2-1,k_3,\ldots,k_m) + \mathcal{F}(k_3-1,k_4,\ldots,k_m)\]
and
\[
\mathcal{F}(k_1,\ldots,k_m) = \mathcal{F}(k_1,\ldots,k_{m-2},k_{m-1}-1) + \mathcal{F}(k_1,\ldots,k_{m-3},k_{m-2}-1).
\]
Also, \[ \mathcal{F}(k_1,k_2)=3, \ \  \mathcal{F}(k)=2 \text{ for $k\geq 0$}.\] 
\end{theorem}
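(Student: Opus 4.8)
The plan is to prove the two recursions by a case split on the value of the first (respectively last) coordinate of a fixed point, exactly in the style of the proof of Proposition~\ref{prop:endpoint}, and to settle the base cases by a direct count.

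For the first recursion, observe that the right-hand side depends only on the powers $k_i$, so by the first lemma we may assume the leading operator is $\wedge$, i.e.\ $f=\wedge^{k_1}\vee^{k_2}\wedge^{k_3}\cdots$; write out the fixed-point equations $x_1=x_2$, $x_i=x_{i-1}\diamondsuit_i x_{i+1}$, $x_n=x_{n-1}$ as in Proposition~\ref{prop:endpoint}, and split into the cases $x_1=0$ and $x_1=1$. If $x_1=0$, then $x_2=0$ from the boundary equation and, using $0=p\wedge q\Rightarrow 0\wedge q=0$, the value $0$ propagates through the whole leading $\wedge$-run, forcing $x_1=\cdots=x_{k_1+1}=0$; the first equation of the $\vee$-run then collapses to $x_{k_1+2}=x_{k_1+3}$, and the equations on the remaining variables $x_{k_1+2},\ldots,x_n$ are exactly the fixed-point equations of the network obtained by deleting the leading $\wedge$-run together with one operator of the $\vee$-run, namely $(k_2-1,k_3,\ldots,k_m)$ (here the convention $\mathcal{F}(0,\cdot)=\mathcal{F}(\cdot)$ is used when $k_2=1$). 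If $x_1=1$, then $x_2=1$ and, using $1=p\wedge q\Rightarrow q=1$ along the $\wedge$-run and $x_{i-1}=1\Rightarrow x_i=x_{i-1}\vee x_{i+1}=1$ along the $\vee$-run, the value $1$ propagates through the leading $\wedge$-run and then through the entire following $\vee$-run, forcing $x_1=\cdots=x_{k_1+k_2+1}=1$; the first equation of the next $\wedge$-run collapses to $x_{k_1+k_2+2}=x_{k_1+k_2+3}$, and the remaining equations are those of $(k_3-1,k_4,\ldots,k_m)$. Since every fixed point lies in exactly one of the two cases and in each case the fixed points are in bijection with those of the indicated smaller network, summing gives $\mathcal{F}(k_1,\ldots,k_m)=\mathcal{F}(k_2-1,k_3,\ldots,k_m)+\mathcal{F}(k_3-1,k_4,\ldots,k_m)$.

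The second recursion follows by symmetry: reversing the order of the variables ($x_i\mapsto x_{n+1-i}$) is a bijection on fixed points and turns the network $(k_1,\ldots,k_m)$ into one whose run lengths are $(k_m,\ldots,k_1)$ (the leading operator may change, but by the first lemma that does not change the count), so $\mathcal{F}(k_1,\ldots,k_m)=\mathcal{F}(k_m,\ldots,k_1)$; applying the first recursion to $(k_m,\ldots,k_1)$ and reversing the sequences back yields the second identity. For the base cases: if $m=1$ all the operators coincide, so by the argument in the proof of Lemma~\ref{lemma:01} all coordinates of a fixed point are equal, leaving only $\textbf{0}$ and $\textbf{1}$, hence $\mathcal{F}(k)=2$ for $k\geq 1$; and $\mathcal{F}(0)=\mathcal{F}()=2$ since $f(x_1,x_2)=(x_2,x_1)$ has exactly the fixed points $00$ and $11$. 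If $m=2$, Lemma~\ref{lemma:01} gives that a fixed point has the form $(\textbf{x}^1,\textbf{x}^2)$ with $\textbf{x}^1,\textbf{x}^2\in\{\textbf{0},\textbf{1}\}$, so there are at most four; $\textbf{0}$ and $\textbf{1}$ always qualify, and of the two mixed strings exactly one satisfies the single equation joining the two runs, so $\mathcal{F}(k_1,k_2)=3$. Finally, since each recursion strictly decreases $m$, together with the $m\leq 2$ base cases these identities determine $\mathcal{F}$ completely.

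I expect the main obstacle to be the bookkeeping in the $x_1=1$ branch of the first recursion: one must verify that $1$ propagates through the leading $\wedge$-run and then through precisely the whole subsequent $\vee$-run, stopping exactly where the next $\wedge$-run begins, so that exactly one operator of that $\wedge$-run is absorbed into the new boundary equation $x_{k_1+k_2+2}=x_{k_1+k_2+3}$ and the leftover system is genuinely the network $(k_3-1,k_4,\ldots,k_m)$ with the correct leading operator; the analogous but easier count in the $x_1=0$ branch, and the verification that the degenerate cases $k_2=1$, $k_3=1$ and $m=3$ are covered by the stated conventions, also need care.
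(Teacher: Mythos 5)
Your proof is correct, but it takes a somewhat different route than the paper. The paper proves the theorem in one line by combining Proposition~\ref{prop:reduction} (first replacing every interior $k_i$ by $\min\{k_i,2\}$ and the end runs by $1$) with the four-case recursion of Proposition~\ref{prop:endpoint}, and then settles $m=3$ by complete enumeration, since Proposition~\ref{prop:endpoint} only covers networks with at least four runs; the right-end recursion is left implicit. You instead run the $x_1=0$ versus $x_1=1$ propagation argument (the idea inside the proof of Proposition~\ref{prop:endpoint}) directly on the unreduced network $\wedge^{k_1}\vee^{k_2}\wedge^{k_3}\cdots$, and this is legitimate: $0$ annihilates the whole leading $\wedge$-run and collapses the first $\vee$-equation into a boundary equation, while $1$ saturates the leading $\wedge$-run together with the entire following $\vee$-run, so the two leftover systems are exactly those of $(k_2-1,k_3,\ldots,k_m)$ and $(k_3-1,k_4,\ldots,k_m)$, with the stated conventions absorbing $k_2=1$, $k_3=1$, and $m=3$. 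What this buys you is uniformity: you never need Proposition~\ref{prop:reduction}, the restriction $r_i\in\{1,2\}$, or the $m=3$ enumeration, and your explicit reversal bijection $x_i\mapsto x_{n+1-i}$ (combined with the negation lemma) is a cleaner justification of the second recursion than the paper's tacit left--right symmetry; the price is redoing the propagation bookkeeping in general, which the paper's modular route avoids by reusing its propositions. One small imprecision: in your $m=2$ base case there are two equations coupling the values $a,b$ of the two runs (the last equation of the first run, $a=a\wedge b$, and the first equation of the second run, $b=a\vee b$), not a single one, but both impose the same condition, so your count $\mathcal{F}(k_1,k_2)=3$ stands.
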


\begin{proof}
For $m\geq 4$ the result follows directly from Propositions \ref{prop:reduction} and \ref{prop:endpoint}. For $m=3$ the results follows from $\mathcal{F}(1,2,1)=5$, $\mathcal{F}(1,1,1)=4$, $\mathcal{F}(1,1)=3$,  $\mathcal{F}(1)=2$, and $\mathcal{F}(0)=2$ which can be easily checked by complete enumeration.

\end{proof}

\textit{Example} \ref{eg:AOnet} (cont.) We now use Theorem \ref{thm:main} to find the number of fixed points of $\wedge^2\!\vee\!\wedge\!\vee^3\!\wedge^2\vee$:
\[
\begin{array}{lll}
\mathcal{F}(2,1,1,3,2,1) & = & \mathcal{F}(1,1,1,2,2,1) \\
& = &  \mathcal{F}(1-1,1,2,2,1) +\mathcal{F}(1-1,2,2,1) \\
& = &  \mathcal{F}(1,2,2,1) +\mathcal{F}(2,2,1) \\
& = &  \mathcal{F}(2-1,2,1) + \mathcal{F}(2-1,1) + 
       \mathcal{F}(2-1,1) + \mathcal{F}(1-1) \\
 & = &  \mathcal{F}(1,2,1) + \mathcal{F}(1,1) + 
       \mathcal{F}(1,1) + \mathcal{F}(0) \\
  & = &  \mathcal{F}(2-1,1) + \mathcal{F}(1-1)+
   \mathcal{F}(1,1) +         \mathcal{F}(1,1) + \mathcal{F}(0) \\      
  & = &  \mathcal{F}(1,1) + \mathcal{F}(0)+
   \mathcal{F}(1,1) +         \mathcal{F}(1,1) + \mathcal{F}(0) \\      
& = &  3 + 2 + 3 + 3 + 2 \\      
& = &  13 \\
\end{array}
\]
or
\[
\begin{array}{lll}
\mathcal{F}(2,1,1,3,2,1) & = & \mathcal{F}(1,1,1,2,2,1) \\
& = & \mathcal{F}(1,1,1,2,2-1) + \mathcal{F}(1,1,1,2-1) \\
& = & \mathcal{F}(1,1,1,2,1) + \mathcal{F}(1,1,1,1) \\
& = & \mathcal{F}(1,1,1,2-1) + \mathcal{F}(1,1,1-1) +
\mathcal{F}(1,1,1-1) + \mathcal{F}(1,1-1) \\
& = & \mathcal{F}(1,1,1,1) + \mathcal{F}(1,1) +
\mathcal{F}(1,1) + \mathcal{F}(1) \\
& = & \mathcal{F}(1,1,1-1) + \mathcal{F}(1,1-1) + 
\mathcal{F}(1,1) + \mathcal{F}(1,1) + \mathcal{F}(1) \\
& = & \mathcal{F}(1,1) + \mathcal{F}(1) + 
\mathcal{F}(1,1) + \mathcal{F}(1,1) + \mathcal{F}(1) \\
& = & 3 +2 +3 +3 +2\\
& = & 13
\end{array}
\]

In this way, Theorem \ref{thm:main} provides a recursive formula to compute the number of fixed points of AND-OR networks with chain topology without the need of exhaustive enumeration. We now study 2 especial cases $\mathcal{F}(1,1,\ldots,1,1)$ and $\mathcal{F}(2,2,\ldots,2,2)$.

Define $A_n=(1,\underbrace{1,1,\ldots,1,1}_{n \text{ times}},1)$ and $B_n=(2,\underbrace{2,2,\ldots,2,2}_{n \text{ times}},2)$. Define the sequences $a_0=1$, $a_1=1$, $a_2=1$, and $a_{n}=a_{n-2}+a_{n-3}$ for $n\geq 3$ and $b_0=1$, $b_1=1$, and $b_{n}=b_{n-1}+b_{n-2}$ for $n\geq 2$. Note that $(a_n)$ is the  Padovan sequence and $(b_n)$ is the Fibonacci sequence.

\begin{corollary}\label{cor:anbn}
 With the definitions above we have $\mathcal{F}(A_n)=a_{n+5}$ and $\mathcal{F}(B_n)=b_{n+3}$ for $n\geq 0$, and the sharp bounds $\mathcal{F}(A_n)\leq \mathcal{F}(1,r_1,r_2,\ldots,r_n,1) \leq\mathcal{F}(B_n)$ for all $r_i\geq 1$.
\end{corollary}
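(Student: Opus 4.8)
The plan is to establish the two closed formulas first and then deduce the bounds from them.

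\emph{The closed formulas.} For $\mathcal{F}(A_n)$, apply the first identity of Theorem~\ref{thm:main} to $A_n=(1,1,\ldots,1)$ (with $m=n+2$ entries): since $k_2-1=k_3-1=0$, the convention that a leading $0$ may be dropped turns the two resulting terms into $\mathcal{F}(1,\ldots,1)$ with $n$ entries and with $n-1$ entries, i.e.\ $\mathcal{F}(A_n)=\mathcal{F}(A_{n-2})+\mathcal{F}(A_{n-3})$ for $n\ge 3$. This is the Padovan recursion shifted by $5$ (namely $a_{n+5}=a_{n+3}+a_{n+2}$), so together with the base cases $\mathcal{F}(A_0)=\mathcal{F}(1,1)=3=a_5$, $\mathcal{F}(A_1)=\mathcal{F}(1,1,1)=4=a_6$ and $\mathcal{F}(A_2)=\mathcal{F}(1,1,1,1)=5=a_7$ (the latter two from Theorem~\ref{thm:main} and the small values computed in its proof) an induction on $n$ gives $\mathcal{F}(A_n)=a_{n+5}$. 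For $\mathcal{F}(B_n)$, first use Proposition~\ref{prop:reduction} to get $\mathcal{F}(B_n)=\mathcal{F}(1,2,\ldots,2,1)=:d_n$ (the endpoints collapse to $1$, the $n$ middle $2$'s survive); applying the first identity of Theorem~\ref{thm:main} to $(1,2,\ldots,2,1)$ for $n\ge 2$ and using $k_2-1=k_3-1=1$, both terms are again of the form $(1,2,\ldots,2,1)$, which yields $d_n=d_{n-1}+d_{n-2}$. With $d_0=\mathcal{F}(1,1)=3=b_3$ and $d_1=\mathcal{F}(1,2,1)=5=b_4$ the Fibonacci recursion then gives $\mathcal{F}(B_n)=d_n=b_{n+3}$.

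\emph{The bounds.} By Proposition~\ref{prop:reduction} we may assume $r_i\in\{1,2\}$. Write $m(\vec r):=\mathcal{F}(1,\vec r,1)$ for $\vec r\in\{1,2\}^k$; the formulas above say $m(1^n)=a_{n+5}=\mathcal{F}(A_n)$ and $m(2^n)=b_{n+3}=\mathcal{F}(B_n)$, so it suffices to prove by strong induction on $n$ that $a_{n+5}\le m(\vec r)\le b_{n+3}$ for all $\vec r\in\{1,2\}^n$. The cases $n=0,1,2$ are immediate (at most four sequences each, values via Theorem~\ref{thm:main}). For $n\ge 3$, apply Proposition~\ref{prop:endpoint} according to the value of $(r_1,r_2)$ and rewrite each resulting term in the form $m(\cdot)$ using Proposition~\ref{prop:reduction} to normalize the endpoints to $1$; this gives
\[
m(\vec r)=\begin{cases}
m(r_3,\ldots,r_n)+m(r_4,\ldots,r_n), & (r_1,r_2)=(1,1),\\
2\,m(r_3,\ldots,r_n), & (r_1,r_2)=(1,2),\\
m(1,r_3,\ldots,r_n)+m(r_4,\ldots,r_n), & (r_1,r_2)=(2,1),\\
m(2,r_3,\ldots,r_n)+m(r_3,\ldots,r_n), & (r_1,r_2)=(2,2),
\end{cases}
\]
where the arguments have lengths $n-2$, $n-3$, $n-1$, $n-1$, all strictly less than $n$ and nonnegative for $n\ge 3$. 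Substituting the inductive hypothesis and using the Padovan and Fibonacci recursions together with the fact (a one-line induction) that $(a_k)$ and $(b_k)$ are non-decreasing, each of the four cases yields $a_{n+5}\le m(\vec r)\le b_{n+3}$; for example, case $(1,1)$ gives $m(\vec r)\ge a_{n+3}+a_{n+2}=a_{n+5}$ and $m(\vec r)\le b_{n+1}+b_n=b_{n+2}\le b_{n+3}$, and case $(1,2)$ gives $m(\vec r)\ge 2a_{n+3}\ge a_{n+3}+a_{n+2}=a_{n+5}$ and $m(\vec r)\le 2b_{n+1}\le b_{n+2}+b_{n+1}=b_{n+3}$. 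The bounds are sharp because they are attained at $\vec r=1^n$ and $\vec r=2^n$.

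\emph{Expected main obstacle.} No single step is deep; the real work is the bookkeeping in the inductive step, namely determining in each of the four cases of Proposition~\ref{prop:endpoint} the lengths of the smaller arguments that remain after the Proposition~\ref{prop:reduction}/convention normalization, and then checking that the corresponding integer inequalities among the $a$'s and $b$'s indeed follow from the two recursions plus monotonicity; one also has to be a little careful with the degenerate small-$n$ instances. Conceptually the bounds are transparent from Lemma~\ref{lemma:01}: a fixed point is a $0/1$ labelling of the blocks, the constraints between adjacent blocks depend only on the operators and not on the block sizes, and an interior block of size $1$ imposes an extra ``pinning'' equality; decreasing an $r_i$ from $2$ to $1$ only adds a pinning constraint and hence cannot increase the number of fixed points — but the inductive argument above avoids having to make this block-constraint picture precise.
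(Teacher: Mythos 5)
Your proposal is correct and follows essentially the same route as the paper, whose proof is simply the remark that the corollary follows from Theorem~\ref{thm:main} and Proposition~\ref{prop:endpoint} by induction; you have merely written out the details (the Padovan/Fibonacci recursions for $\mathcal{F}(A_n)$ and $\mathcal{F}(B_n)$, and the four-case inductive comparison via Propositions~\ref{prop:reduction} and~\ref{prop:endpoint}), all of which check out. The only nitpick is the slightly garbled list of argument lengths in the four cases (e.g.\ case $(2,1)$ gives lengths $n-1$ and $n-3$, case $(2,2)$ gives $n-1$ and $n-2$), but your stated conclusion that all are nonnegative and strictly less than $n$ for $n\geq 3$ is what matters and is true.
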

\begin{proof}
It follows from Theorem \ref{thm:main} or Proposition \ref{prop:endpoint} using induction.
\end{proof}

\section{Infinite and Closed Chain}
\label{sec:infchain}

In this section we study the cases of AND-OR networks with infinitely many variables and when the topology is a closed chain. 

When the AND-OR network has infinitely many variables we have a collection of Boolean functions $f=(\ldots,f_{-2},f_{-1},f_0,f_1,f_2,\ldots)$ such that $f_i=x_{i-1}\wedge x_{i+1}$ or $f_i=x_{i-1}\vee x_{i+1}$. We can use the same notation of Section \ref{sec:openchain} and denote consecutive logical operators as $\wedge^k$ or $\vee^k$, where $k$ could also be $\infty$. Also, we can simply use the exponents to represent the AND-OR network. For example, $(\infty,1,2,\infty)$ and $\wedge^\infty \vee \wedge^2\vee^\infty$ represent the AND-OR network $\ldots\wedge\wedge\wedge\vee\wedge\wedge\vee\vee\vee\ldots$, and $(\ldots,1,1,2,1,1,2,1,1,2,\ldots)$ and $\ldots\wedge\vee\wedge^2\vee\wedge\vee^2\wedge\vee\wedge^2 \ldots$ represent the AND-OR network $\ldots\wedge\vee\wedge\wedge\vee\wedge\vee\vee\wedge\vee\wedge\wedge \ldots$. 

The following theorem allows us to use the results from Section \ref{sec:openchain} to study AND-OR networks with infinitely many variables. 

\begin{theorem}\label{thm:inf}
With the notation above and $k_i\geq 1$ we have the following.
\[
\begin{array}{rll}
\mathcal{F}(\infty) & = &  2 \\
\mathcal{F}(\infty,k_1,k_2,\ldots,k_{m-1},k_m,\infty) & = &  \mathcal{F}(1,k_1,k_2,\ldots,k_{m-1},k_m,1) \\
\mathcal{F}(\infty,k_1,k_2,k_3,\ldots) & = &  \infty \\
\mathcal{F}(\ldots,k_{-3},k_{-2},k_{-1},\infty) & = &  \infty \\
\mathcal{F}(\ldots,k_{-3},k_{-2},k_{-1},k_0,k_1,k_2,k_3,\ldots) & = &  \infty \\
\end{array}
\]
\end{theorem}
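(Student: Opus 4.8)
The plan is to treat the five identities one at a time, reducing the last four to the finite theory of Section~\ref{sec:openchain} after first upgrading Lemma~\ref{lemma:01} to infinite chains. Since the proof of Lemma~\ref{lemma:01} uses only the local implications ``$q=p\wedge r$ and $r=q\wedge s$ imply $q=r$'' (and the $\vee$-analog), it carries over verbatim: on a fixed point of an infinite AND--OR chain, within every maximal run of consecutive coordinates carrying the same operator all coordinates of the fixed point are equal; in particular each infinite tail $\wedge^{\infty}$ or $\vee^{\infty}$ is constant on a fixed point. This already disposes of $\mathcal F(\infty)$: the network $(\infty)$ is a single infinite same-operator block, so every fixed point is constant, hence equal to $\mathbf 0$ or $\mathbf 1$, and $\mathcal F(\infty)=2$.

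For $\mathcal F(\infty,k_1,\dots,k_m,\infty)$, observe that on a fixed point both infinite tails are constant by the previous paragraph. The argument in the proof of Proposition~\ref{prop:reduction} that replaces a constant leading (or trailing) block by a block of length $1$ uses only that the block is constant, so it applies here without change: collapsing the left infinite tail to its last two coordinates, and likewise the right tail, is a bijection onto the fixed points of $(1,k_1,\dots,k_m,1)$ --- the equation $x_1=x_2$ of a length-$1$ endpoint block records precisely the constancy of the corresponding tail, and every other equation is carried over term by term. Hence $\mathcal F(\infty,k_1,\dots,k_m,\infty)=\mathcal F(1,k_1,\dots,k_m,1)$.

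For the three remaining cases I would produce infinitely many distinct fixed points by an explicit construction. The key fact is: whenever a (finite or infinite) $\wedge$-block is immediately followed, to its right, by a $\vee$-block, the configuration that equals $0$ on that $\wedge$-block and on everything to its left, and equals $1$ on that $\vee$-block and on everything to its right, is a fixed point. Indeed, every fixed-point equation of this configuration is one of $0=0\wedge0$, $0=0\vee0$, $1=1\wedge1$, $1=1\vee1$, or one of the two boundary equations at the junction; I would verify the boundary equations, including the degenerate length-$1$ case in which they read $0=v_{\mathrm{left}}\wedge1$ with $v_{\mathrm{left}}=0$ and $1=0\vee v_{\mathrm{right}}$ with $v_{\mathrm{right}}=1$. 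Because the blocks alternate in type, such $\wedge$-to-$\vee$ junctions occur infinitely often: to the right of the constant tail in $(\infty,k_1,k_2,\dots)$, to the left of the constant tail in $(\dots,k_{-3},k_{-2},k_{-1},\infty)$, and on both sides of the bi-infinite chain. Distinct junctions give distinct configurations, since the position of the $0$-to-$1$ step differs; and in the one-sided cases the constant tail demanded by the construction (set identically to $0$ or to $1$) is always a legitimate restriction of a fixed point. Thus each of these three networks has infinitely many fixed points.

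The only step requiring genuine care is the bookkeeping in the second paragraph: one must check that after collapsing the two constant tails the resulting system is \emph{literally} the system defining $(1,k_1,\dots,k_m,1)$ --- with the operators and variable indices aligning exactly --- rather than merely a system with the same number of solutions; granting that, the count is inherited from Section~\ref{sec:openchain}. The other ingredients (constancy of infinite same-operator runs, and the verification that the step configurations satisfy every equation) are short local checks.
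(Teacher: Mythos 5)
Your proof is correct. For the first two identities you follow essentially the paper's route: the constancy of same-operator runs (the infinite-chain version of Lemma \ref{lemma:01}) gives $\mathcal{F}(\infty)=2$, and collapsing the constant infinite tails to length-one endpoint blocks is exactly the paper's remark that the second identity ``follows the same approach seen in Proposition \ref{prop:reduction}''; your extra care that the collapsed system is literally the system for $(1,k_1,\ldots,k_m,1)$ is a welcome tightening of that one-line reduction. For the three $=\infty$ identities, however, you take a genuinely different and arguably cleaner path. The paper reduces $(\infty,k_1,k_2,\ldots)$ to $(1,k_1,k_2,\ldots)$, shows that every fixed point of each finite truncation $(1,k_1,\ldots,k_r)$ extends to the infinite network by appending a constant tail, and then invokes the unboundedness of the finite counts via the Padovan lower bound in Corollary \ref{cor:anbn}. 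You instead exhibit an explicit infinite family of fixed points: for each $\wedge$-block immediately followed by a $\vee$-block, the step configuration ($0$ on and to the left of the $\wedge$-block, $1$ on and to the right of the $\vee$-block) satisfies every equation, including the two junction equations $0=0\wedge 1$ and $1=0\vee 1$, and distinct junctions give distinct steps; since the finite blocks alternate and are infinite in number, this immediately yields infinitely many fixed points in all three cases, with the constant values on any infinite tail trivially consistent. Your construction is self-contained (it does not need Corollary \ref{cor:anbn} or the truncation-extension argument), while the paper's argument has the side benefit of showing that the fixed points of every finite truncation inject into those of the infinite network; both are valid.
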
 
\begin{proof}
To prove the first equality we consider the AND-OR network where all logical operators are $\wedge$. If one of the variables is 0, it follows that all the other variables are also 0. Similarly, if one of the variables is 1, all the other variables are also 1. Thus, the only fixed points of this AND-OR network are $\mathbf{0}$ and $\mathbf{1}$.

The second equality follows the same approach seen in Proposition \ref{prop:reduction}.

To prove the third equality we first observe that $\mathcal{F}(\infty,k_1,k_2,k_3,\ldots)=\mathcal{F}(1,k_1,k_2,k_3,\ldots)$. Now, we will show that any fixed point of the AND-OR network $\mathcal{F}(1,k_1,k_2,k_3,\ldots,k_r)$ defines a fixed point of $\mathcal{F}(1,k_1,k_2,k_3,\ldots)$. Indeed, using the notation of Lemma \ref{lemma:01}, a fixed point of the AND-OR network $\mathcal{F}(1,k_1,\ldots,k_r)$ has the form $\mathbf{x}=(\mathbf{x}^0,\mathbf{x}^1,\ldots,\mathbf{x^r})$. Then, denoting $\mathbf{z}=(1,1,\ldots)$ if $\mathbf{x}^r=\mathbf{1}$ and  $\mathbf{z}=(0,0,\ldots)$ if $\mathbf{x}^r=\mathbf{0}$, it follows that $(\mathbf{x}^0,\mathbf{x}^1,\ldots,\mathbf{x^r},\mathbf{z})$ is a fixed point of $\mathcal{F}(1,k_1,k_2,k_3,\ldots)$. Since $r$ is arbitrary, $\mathcal{F}(1,k_1,\ldots,k_r)$ is not bounded (see Corollary \ref{cor:anbn}) and then number of fixed points of $\mathcal{F}(1,k_1,\ldots)$ is $\infty$. The last two equalities are similar.
\end{proof}

When the topology of the network is a closed chain, we have the network
\begin{displaymath}
\begin{array}{lllllll}
f_1=x_n\diamondsuit_1 x_2, & 
f_2=x_1\diamondsuit_2 x_3, &
f_3=x_2\diamondsuit_3 x_4, &
\ldots & , &
f_{n-1}=x_{n-2}\diamondsuit_{n-1} x_n, &
f_n=x_{n-1}\diamondsuit_n x_1.
\end{array}
\end{displaymath}
We denote this network as $[k_1,k_2,\ldots,k_r]$ or any cyclic permutation that groups consecutive logical operators. Thus, the AND-OR network 
\begin{displaymath}
\begin{array}{lllllll}
f_1=x_n \wedge x_2, & 
f_2=x_1 \vee x_3, &
f_3=x_2 \wedge x_4, &
f_4=x_3 \vee x_5, &
f_5=x_4 \vee x_6, &
f_6=x_5 \wedge x_1, 
\end{array}
\end{displaymath}
will not be denoted by $[1,1,1,2,1]$ (``splitting'' the first and last $\wedge$'s), but by $[1,1,2,2]$, $[1,2,2,1]$, $[2,2,1,1]$, or $[2,1,1,2]$ (combining the first and last $\wedge$'s). This means that $r$ in $[k_1,k_2,\ldots,k_r]$ will always be an even number or equal to 1. The number of fixed points will be denoted by $\mathcal{F}[k_1,k_2,\ldots,k_r]$.
The following propositions and theorem allow us to use the results from Section \ref{sec:openchain} to study AND-OR networks with closed chain topology. 

\begin{proposition}\label{prop:reduction_closed}
With the notation above, we have that for $k_i\geq 1$
\[
\mathcal{F}[k_1,k_2,\ldots,k_r] = \mathcal{F}[\min\{2,k_1\},\min\{2,k_2\},\ldots,\min\{2,k_r\}]. 
\]
\end{proposition}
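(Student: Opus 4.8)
The plan is to mimic the proof of Proposition~\ref{prop:reduction} (the open-chain reduction) almost verbatim, transferring its local argument to the cyclic setting. The essential mechanism there is Lemma~\ref{lemma:01}: in any fixed point, a maximal block of coordinates governed by a single operator is forced to be constant ($\mathbf 0$ or $\mathbf 1$), so the precise length of that block is irrelevant to counting fixed points as long as it is at least some minimal value. For a closed chain, the same local lemma still applies verbatim to every interior position, since the hypothesis ``$q = p\wedge r$ and $r = q\wedge s$ implies $q=r$'' (and its OR-dual) is purely local and makes no reference to endpoints. The only structural difference is that a closed chain has no first or last coordinate function depending on a single variable; instead every coordinate depends on two neighbors, and the blocks wrap around cyclically. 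So the first step is to record the closed-chain analogue of Lemma~\ref{lemma:01}: writing a point of the domain as $\mathbf x = (\mathbf x^1,\dots,\mathbf x^r)$ with $\mathbf x^i\in\{0,1\}^{k_i}$ arranged around the cycle, a fixed point forces each $\mathbf x^i \in\{\mathbf 0,\mathbf 1\}$. This follows from the same ``consecutive equal-operator variables are equal'' observation, now applied around the loop; no boundary bookkeeping is needed because there is no boundary.

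Next I would do the reduction one block at a time, exactly as in Proposition~\ref{prop:reduction}. Fix an index $i$ with $k_i \ge 2$ and compare $f = [\,\dots, k_i, \dots]$ with $g = [\,\dots, 2, \dots]$ (all other exponents unchanged). Given a fixed point $\mathbf x$ of $f$, Lemma~(closed analogue) gives $\mathbf x^i = \mathbf 0$ or $\mathbf 1$; replace it by the length-$2$ block $\mathbf z = ((\mathbf x^i)_1,(\mathbf x^i)_2)$ with the same constant value, leaving all other blocks alone, to get $\mathbf y$. One checks $\mathbf y$ is a fixed point of $g$: the equations inside the shrunk block are trivially satisfied (constant block with a monotone binary operator), and the two ``boundary'' equations of the block — at its first and last coordinate, which involve a neighbor from block $i-1$ or $i+1$ — are unchanged in form because the adjacent blocks and their boundary values are identical. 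Conversely, a fixed point $\mathbf y$ of $g$ has $\mathbf z\in\{\mathbf 0,\mathbf 1\}$ by the same lemma, and one inflates it back to a length-$k_i$ constant block to recover a fixed point $\mathbf x$ of $f$. These two maps are mutually inverse, so $\mathcal F[\dots,k_i,\dots] = \mathcal F[\dots,2,\dots]$. Iterating over all $i$ with $k_i\ge 2$ (and noting $\min\{2,k_i\}=k_i$ when $k_i\in\{1,2\}$) yields the claimed identity.

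One bookkeeping point deserves care, and it is the part I expect to be the only real obstacle: the proposition's convention that $r$ is even or equal to $1$, i.e. the representation $[k_1,\dots,k_r]$ always \emph{merges} the wrap-around block rather than splitting it. When I shrink an interior block this never merges or splits anything, so the parity of $r$ is preserved and the statement is literally about equal-length tuples; fine. But I should make sure the argument does not accidentally create a length-$1$ block adjacent to another block with the \emph{same} operator, which the convention forbids — it cannot, because shrinking $k_i\ge 2$ to $2$ keeps the block length at $2\ge 1$ and the operator pattern of neighbors is untouched, so alternation is preserved. The edge case $r=1$, i.e. $[k_1]$ with a single operator around the whole loop (all $\wedge$ or all $\vee$): here the closed-chain Lemma forces the single block to be constant, so $\mathcal F[k_1] = 2 = \mathcal F[\min\{2,k_1\}]$ directly, and this matches the formula trivially. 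With the local lemma and these conventions pinned down, the proof is otherwise a routine repetition of the open-chain argument, so I would write it compactly: state the closed-chain version of Lemma~\ref{lemma:01} (one sentence, ``same proof, no endpoints''), then give the block-shrinking bijection for a single $k_i\ge 2$ and say ``iterating gives the result.''
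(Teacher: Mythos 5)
Your proposal is correct and follows essentially the paper's own route: the paper's proof of Proposition~\ref{prop:reduction_closed} is simply the remark that the argument is analogous to Proposition~\ref{prop:reduction}, and your write-up fleshes out exactly that analogy (a closed-chain version of Lemma~\ref{lemma:01} plus the block-shrinking bijection, with the $r=1$ case and the representation convention checked separately). No gaps beyond the same ``it can be checked'' level of detail the paper itself uses.
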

\begin{proof}
It is analogous to the proof of Proposition \ref{prop:reduction}.
\end{proof}

\begin{proposition}\label{prop:closed}
Consider $k_i\geq 1$, $m\geq 6$, and $l\geq 8$. Then,
\[
\begin{array}{rll}
\mathcal{F}[2,k_2,\ldots,k_m] & = & 
\mathcal{F}(k_2-1,k_3,\ldots,k_{m-1},k_m-1)+
\mathcal{F}(k_3-1,k_4,\ldots,k_{m-2},k_{m-1}-1),   \\
\mathcal{F}[1,k_2,\ldots,k_l] & = &  
\mathcal{F}(k_3-1,k_4,\ldots,k_{l-1}-1) +
\mathcal{F}(k_4-1,k_5,\ldots,k_{l-1},k_l-1) \ + \\
& & \mathcal{F}(k_2-1,k_3,\ldots,k_{l-3},k_{l-2}-1) -
\mathcal{F}(k_4-1,k_5,\ldots,k_{l-3},k_{l-2}-1).
\end{array}
\]
\end{proposition}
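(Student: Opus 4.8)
The plan is to establish both identities with the same cutting technique used in Proposition~\ref{prop:endpoint} and Theorem~\ref{thm:main}: branch the system of fixed-point equations on the value of one carefully chosen coordinate and show that each branch is governed by a \emph{standalone} open chain whose block sequence can be read off at once. By conjugating with $\phi(x)=(\neg x_1,\dots,\neg x_n)$, exactly as in the first lemma of Section~\ref{sec:openchain}, the number of fixed points is unchanged when all $\wedge$'s and $\vee$'s are swapped, so we may assume the first block consists of $\wedge$'s. Thus for $[2,k_2,\dots,k_m]$ we have $f_1=x_n\wedge x_2$, $f_2=x_1\wedge x_3$ with $\diamondsuit_3=\diamondsuit_n=\vee$; for $[1,k_2,\dots,k_l]$ we have the isolated $f_1=x_n\wedge x_2$ with $\diamondsuit_2=\diamondsuit_n=\vee$. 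The convention $\mathcal{F}(0,\dots)=\mathcal{F}(\dots)$ is used throughout.

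For $[2,k_2,\dots,k_m]$ I would branch on $x_1$. If $x_1=0$, then the equation $x_2=x_1\wedge x_3$ forces $x_2=0$, the equation for $x_1$ becomes vacuous in $x_n$, and the equations for $x_3$ and $x_n$ become $x_3=x_4$ and $x_n=x_{n-1}$ (their $\vee$ now has a $0$ input); the surviving equations on $x_3,\dots,x_n$ are exactly those of the open chain with operator word $\diamondsuit_4\cdots\diamondsuit_{n-1}$, which has lost one $\vee$ from block $2$ and one from block $m$, giving $\mathcal{F}(k_2-1,k_3,\dots,k_{m-1},k_m-1)$. If $x_1=1$, then $x_1=x_n\wedge x_2$ forces $x_2=x_n=1$; propagating these $1$'s along the $\vee$-blocks $2$ and $m$ pins every variable of those blocks to $1$, and at the first $\wedge$ of block $3$ and the last $\wedge$ of block $m-1$ the equations become copy equations, leaving the open chain $\mathcal{F}(k_3-1,k_4,\dots,k_{m-2},k_{m-1}-1)$ on the variables of blocks $3,\dots,m-1$. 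Adding the two branch counts gives the first identity; $m\ge 6$ ensures that blocks $3$ and $m-1$ are distinct and that the displayed subscript ranges are non-degenerate.

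For $[1,k_2,\dots,k_l]$ the cut is at the isolated $\wedge$. The branch $x_1=1$ runs exactly as above (blocks $1,2,l$ collapse onto constants) and contributes $\mathcal{F}(k_3-1,k_4,\dots,k_{l-2},k_{l-1}-1)$, the first term. For the branch $x_1=0$, note that on any fixed point $x_1=x_n\wedge x_2$, so $x_1=0$ holds precisely when $x_2=0$ or $x_n=0$; inclusion-exclusion then expresses the number of these fixed points as (number with $x_1=x_2=0$) $+$ (number with $x_1=x_n=0$) $-$ (number with $x_1=x_2=x_n=0$). When $x_1=x_2=0$ the value $0$ runs through block $2$ (a $\vee$-block fed a $0$ collapses to copy equations and is pinned to $0$) and then through block $3$ (a $\wedge$-block fed a $0$ is pinned to $0$), halting at a copy equation on entering block $4$; together with the equation for $x_n$ becoming $x_n=x_{n-1}$, this leaves the open chain $\mathcal{F}(k_4-1,k_5,\dots,k_{l-1},k_l-1)$ on the variables of blocks $4,\dots,l$. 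Symmetrically $x_1=x_n=0$ pins blocks $l$ and $l-1$ to $0$ and leaves $\mathcal{F}(k_2-1,k_3,\dots,k_{l-3},k_{l-2}-1)$, while $x_1=x_2=x_n=0$ does both and leaves $\mathcal{F}(k_4-1,k_5,\dots,k_{l-3},k_{l-2}-1)$. Substituting these three counts together with the $x_1=1$ count into the inclusion-exclusion decomposition yields the stated four-term formula; $l\ge 8$ is what keeps the left and right $0$-regions disjoint and all four subscript ranges legal.

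The step I expect to be the main obstacle is the bookkeeping inside these propagation arguments: one must say precisely which coordinates get pinned to a constant and which survive as the two free endpoint coordinates of the leftover open chain, exploiting the asymmetry that a $0$ passes unobstructed through a $\wedge$-block while collapsing a flanking $\vee$-block, and dually for a $1$; and one must verify that in each branch exactly two operators are absorbed into the endpoint equations, so that the leftover is literally the open chain claimed. A smaller but genuine point is confirming that on fixed points of $[1,k_2,\dots,k_l]$ the event $x_1=0$ is the union of $\{x_2=0\}$ and $\{x_n=0\}$ and that the three intersections reduce with no extra boundary terms; this, together with keeping the block-index ranges well-formed, is precisely what the hypotheses $m\ge 6$ and $l\ge 8$ are there for.
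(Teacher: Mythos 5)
Your proposal is correct and follows essentially the same route as the paper: branch on $x_1$, reduce each branch to an open chain counted by $\mathcal{F}(\cdots)$, and in the isolated-operator case handle $x_1=0$ by inclusion--exclusion over the subcases $x_2=0$ and $x_n=0$, subtracting the overlap $x_2=x_n=0$. The only difference is presentational: the paper carries out the bookkeeping in the representative case $k_2=k_3=k_4=k_{l-2}=k_{l-1}=k_l=1$ and declares the general case analogous, whereas you describe the constant-propagation through general-sized blocks directly, which matches the paper's computation.
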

\begin{proof}
The first equality is analogous to Proposition \ref{prop:endpoint}. To prove the second equality we use the notation of Lemma \ref{lemma:01}. 

We have several cases to consider for $k_{l-2}$, $k_{l-1}$, $k_l$, $k_2$, $k_3$, and $k_4$. We focus on the case $k_{l-2}=k_{l-1}=k_l=k_2=k_3=k_4=1$ since the other cases are analogous. Note that we want to prove 
\[
\begin{array}{ll}
\mathcal{F}[1,1,1,1,k_5\ldots,k_{l-3},1,1,1]  = &  
\mathcal{F}(1,k_5,\ldots,k_{l-3},1) +
\mathcal{F}(k_5,\ldots,k_{l-3},1,1) \ +  \\
 & \mathcal{F}(1,1,k_5,\ldots,k_{l-3}) -
\mathcal{F}(k_5,\ldots,k_{l-3}).
\end{array}
\]

The fixed points of the AND-OR network are the solutions of
\[
\begin{array}{rll}
x_1 & = & x_{n} \wedge x_2,  \\
x_2 & = & x_1 \vee x_3, \\
x_3 & = & x_2 \wedge x_4, \\
x_4 & = & x_3 \vee x_5, \\
x_5 & = & x_4 \wedge x_6, \\
 & \ \vdots \\
x_{n-3} & = & x_{n-4}\wedge x_{n-2}, \\ 
x_{n-2} & = & x_{n-3}\vee x_{n-1}, \\ 
x_{n-1} & = & x_{n-2}\wedge x_n, \\ 
x_{n} & = & x_{n-1}\vee x_1.
\end{array}
\]

\begin{figure}[h]
\centerline{ \hbox{ \framebox{
\includegraphics[width=0.97\textwidth]{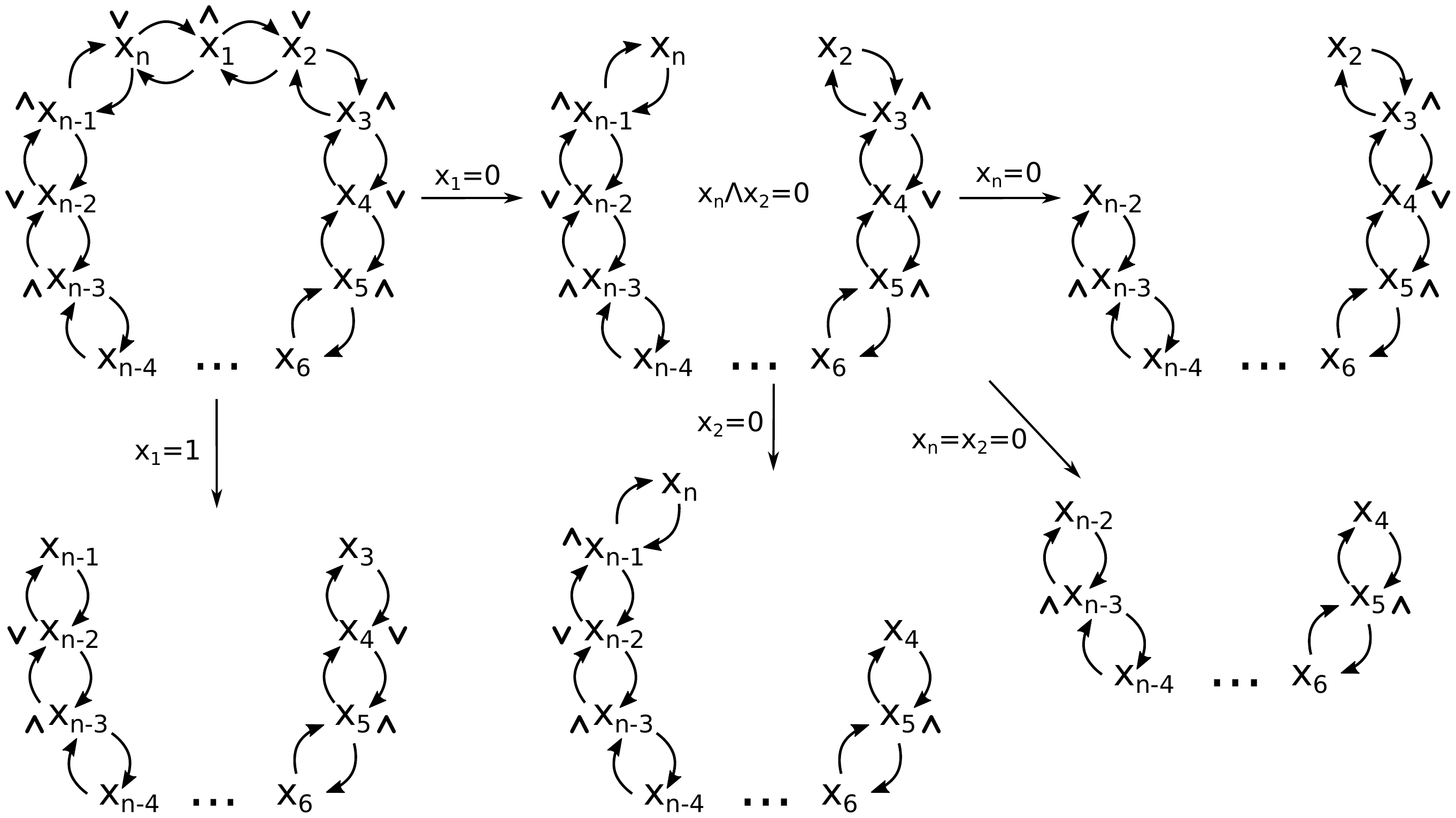}}}
 }\caption{Idea behind the proof of Proposition \ref{prop:closed}  (logical operators are included for clarity). Considering the case $x_1=1$ yields a system of equations that corresponds to a smaller AND-OR network. Considering the case $x_1=0$ yields a system of equation that does not correspond to an AND-OR network (due to the equation $x_n\wedge x_2=0$). However, the subcases $x_n=0$ and $x_2=0$ yield systems of equations that do correspond to smaller AND-OR networks. These two systems have overlapping solutions, so we must also take into consideration the common case $x_n=x_2=0$ when counting the number of fixed points.}
 \label{fig-chain_proof2}
\end{figure}

We now consider the cases $x_1=1$ and $x_1=0$ (see Fig.\ref{fig-chain_proof2}). The case $x_1=1$ yields the system of equations 
\[
\begin{array}{rll}
x_3 & = & x_4, \\
x_4 & = & x_3 \vee x_5, \\
x_5 & = & x_4 \wedge x_6, \\
 & \ \vdots \\
x_{n-3} & = & x_{n-4}\wedge x_{n-2}, \\ 
x_{n-2} & = & x_{n-3}\vee x_{n-1}, \\ 
x_{n-1} & = & x_{n-2},
\end{array}
\]
which has $\mathcal{F}(1,k_5,\ldots,k_{l-3},1)$ solutions. On the other hand, when we consider $x_1=0$ the first equation becomes $x_n\wedge x_2=0$. We now have 2 subcases: $x_n=0$ and $x_2=0$. The subcase $x_n=0$ yields
\[
\begin{array}{rll}
x_2 & = & x_3, \\
x_3 & = & x_2 \wedge x_4, \\
x_4 & = & x_3 \vee x_5, \\
x_5 & = & x_4 \wedge x_6, \\
 & \ \vdots \\
x_{n-3} & = & x_{n-4}\wedge x_{n-2}, \\ 
x_{n-2} & = & x_{n-3},
\end{array}
\]
which has $\mathcal{F}(1,1,k_5,\ldots,k_{l-3})$ solutions. The subcase $x_2=0$ yields 
\[
\begin{array}{rll}
x_4 & = & x_5, \\
x_5 & = & x_4 \wedge x_6, \\
 & \ \vdots \\
x_{n-3} & = & x_{n-4}\wedge x_{n-2}, \\ 
x_{n-2} & = & x_{n-3}\vee x_{n-1}, \\ 
x_{n-1} & = & x_{n-2}\wedge x_n, \\ 
x_{n} & = & x_{n-1},
\end{array}
\]
which has $\mathcal{F}(k_5,\ldots,k_{l-3},1,1)$ solutions. Thus, adding up these 3 numbers we obtain  $\mathcal{F}(1,k_5,\ldots,k_{l-3},1) +
\mathcal{F}(k_5,\ldots,k_{l-3},1,1) + \mathcal{F}(1,1,k_5,\ldots,k_{l-3})$. 
However, this is not $\mathcal{F}[1,1,1,1,k_5\ldots,k_{l-3},1,1,1]$, since the subcases $x_n=0$ and $x_2=0$ overlap. We need to subtract the number of solutions of the system
\[
\begin{array}{rll}
x_4 & = &  x_5, \\
x_5 & = & x_4 \wedge x_6, \\
 & \ \vdots \\
x_{n-3} & = & x_{n-4}\wedge x_{n-2}, \\ 
x_{n-2} & = & x_{n-3},
\end{array}
\]
which has $\mathcal{F}(k_5,\ldots,k_{l-3})$ solutions. Then, the result follows.
\end{proof}

We now declare some conventions to write Proposition \ref{prop:closed} more compactly. We define $\mathcal{F}(-1)=1$, $(k_s-1,\ldots,k_s-1)=(k_s-2)$, and  $(k_s-1,\ldots,k_t-1)=(-1)$ for $s>t$.

\begin{theorem}\label{thm:closed}
With the conventions above, we have that for $m\geq 4$ and $k_i\geq 1$
\[
\begin{array}{rll}
\mathcal{F}[2,k_2,\ldots,k_r] & = & 
\mathcal{F}(k_2-1,k_3,\ldots,k_{r-1},k_r-1)+
\mathcal{F}(k_3-1,k_4,\ldots,k_{r-2},k_{r-1}-1),   \\
\mathcal{F}[1,k_2,\ldots,k_r] & = &  
\mathcal{F}(k_3-1,k_4,\ldots,k_{r-1}-1) +
\mathcal{F}(k_4-1,k_5,\ldots,k_{r-1},k_r-1) \ + \\
& & \mathcal{F}(k_2-1,k_3,\ldots,k_{r-3},k_{r-2}-1) -
\mathcal{F}(k_4-1,k_5,\ldots,k_{r-3},k_{r-2}-1). \\
\end{array}
\]

Also,
\[
\begin{array}{rll}
\mathcal{F}[k] & = &  2 \text{ \ \ for $k\geq 3$}, \\
\mathcal{F}[k,1] & = &  2  \text{ \ \ for $k\geq 2$}, \\
\mathcal{F}[k_1,k_2] & = & 3 \text{ \ \ for $k_1,k_2\geq 2$}, 
\end{array}
\]
\end{theorem}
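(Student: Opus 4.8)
The plan is to split the statement into two independent parts: the two recursive identities, which are almost exactly Proposition \ref{prop:closed} once we allow the stated conventions, and the three base cases $\mathcal{F}[k]$, $\mathcal{F}[k,1]$, $\mathcal{F}[k_1,k_2]$, which I would settle by direct enumeration after applying Proposition \ref{prop:reduction_closed}. Proposition \ref{prop:closed} already proves the first identity for $r\geq 6$ and the second for $r\geq 8$ (recall $r$ is $1$ or even), so the only genuinely new work in the recursive part is to verify the first identity for $r=4$ and the second for $r\in\{4,6\}$.

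For those small values I would rerun the case analysis of Proposition \ref{prop:closed} verbatim: split the fixed-point system of $[2,k_2,\dots,k_r]$ or $[1,k_2,\dots,k_r]$ on the value of a variable lying in the first ($\wedge$-)block; in the $\mathcal{F}[2,\dots]$ case the two branches $x_1=0,1$ each give a smaller open-chain system, and in the $\mathcal{F}[1,\dots]$ case the branch $x_1=0$ forces $x_n\wedge x_2=0$ and therefore splits into the subcases $x_n=0$ and $x_2=0$, whose solution sets overlap precisely in one further open-chain system — this is the source of the inclusion–exclusion term. When $r$ is this small the resulting "smaller networks" are degenerate (an empty chain, a single node, or a chain carrying a block of length $0$), and the substance of the step is checking that with $\mathcal{F}(-1)=1$, $(k_s-1,\dots,k_s-1)=(k_s-2)$, $(k_s-1,\dots,k_t-1)=(-1)$ for $s>t$, together with $\mathcal{F}(0,\dots)=\mathcal{F}(\dots)$ and $\mathcal{F}(k)=2$ from Theorem \ref{thm:main}, the right-hand sides produce the correct counts. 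I expect this convention bookkeeping to be the only real obstacle — in particular, making sure the overlap term in the $\mathcal{F}[1,\dots]$ formula is neither double-counted nor dropped once the central block $k_5,\dots,k_{r-3}$ degenerates to the empty sequence.

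For the base cases I would argue directly. For $\mathcal{F}[k]=2$ with $k\geq 3$: every node carries $\wedge$, so a fixed point satisfies $x_i\le x_{i-1}$ for all $i$ modulo $k$, hence $x_1\le x_k\le x_{k-1}\le\cdots\le x_1$ forces all coordinates equal, leaving only $\mathbf{0}$ and $\mathbf{1}$ (the closed-chain analogue of the first equality of Theorem \ref{thm:inf}). For $\mathcal{F}[k,1]=2$ with $k\geq 2$: the $k\geq 2$ consecutive $\wedge$-nodes force, by the same inequality argument (equivalently, Lemma \ref{lemma:01}), all variables in that block to share a common value $a$; the single $\vee$-node then reads $x=a\vee a=a$, so every coordinate equals $a$ and there are exactly two fixed points — alternatively, Proposition \ref{prop:reduction_closed} reduces to $[2,1]$, whose $2^3$ states are enumerated at once. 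For $\mathcal{F}[k_1,k_2]=3$ with $k_1,k_2\geq 2$: Proposition \ref{prop:reduction_closed} gives $\mathcal{F}[k_1,k_2]=\mathcal{F}[2,2]$, and a direct check of $f=(x_4\wedge x_2,\ x_1\wedge x_3,\ x_2\vee x_4,\ x_3\vee x_1)$ shows its fixed points are exactly $0000$, $0011$, and $1111$. Since the values $r=1$ and $r=2$ (after the cyclic normalization $[1,k_2]\equiv[k_2,1]$) are precisely these base cases, the recursions and the base cases together account for every closed AND-OR chain.
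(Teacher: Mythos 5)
Your proposal is correct and follows essentially the same route as the paper: the two recursive formulas are obtained from Propositions \ref{prop:reduction_closed} and \ref{prop:closed} together with the stated conventions, and the base cases $\mathcal{F}[k]$, $\mathcal{F}[k,1]$, $\mathcal{F}[k_1,k_2]$ are handled by reduction to $[3]$, $[2,1]$, $[2,2]$ and direct enumeration. If anything, you are more explicit than the paper's one-line proof in isolating the degenerate cases $r=4$ (and $r=6$ for the second formula) that lie outside the hypotheses of Proposition \ref{prop:closed} and must be verified against the conventions $\mathcal{F}(-1)=1$, $(k_s-1,\ldots,k_s-1)=(k_s-2)$, $(k_s-1,\ldots,k_t-1)=(-1)$ for $s>t$.
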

\begin{proof}
The first two equalities follows directly from Proposition \ref{prop:reduction_closed} and \ref{prop:closed} using the convention declared above. The last 3 equalities follow from Proposition \ref{prop:reduction_closed} and $\mathcal{F}[3]=\mathcal{F}[2,1]=2$ and $\mathcal{F}[2,2]=3$, which can be verified by complete enumeration.
\end{proof}

As in Section \ref{sec:openchain}, we now consider the cases  $A_n=(1,\underbrace{1,1,\ldots,1,1}_{n \text{ times}},1)$ and $B_n=(2,\underbrace{2,2,\ldots,2,2}_{n \text{ times}},2)$. We denote the number of fixed points of the corresponding AND-OR networks with closed chain topology as $\mathcal{F}[A_n]$ and $\mathcal{F}[B_n]$, respectively.

\begin{corollary} With the notation above we have $\mathcal{F}[A_n]=3a_{n}-a_{n-2}$ and  $\mathcal{F}[B_n]=b_{n+2}+b_{n}$ for $n\geq 2$, and the sharp bounds $\mathcal{F}[A_{n}] \leq \mathcal{F}[k_0,k_1,\ldots,k_n,k_{n+1}]\leq \mathcal{F}[B_{n}]$ for all $r_i\geq 1$
\end{corollary}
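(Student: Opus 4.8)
The plan is to establish the two closed formulas by induction using the recursions already available, and then to prove the sharp bounds by a monotonicity argument on the recursion itself. For the formula $\mathcal{F}[A_n] = 3a_n - a_{n-2}$, I would specialize the second equality of Theorem~\ref{thm:closed} (the case $\mathcal{F}[1,k_2,\ldots,k_r]$) to $A_n = (1,1,\ldots,1)$. After applying the conventions $\mathcal{F}(-1)=1$ and $(k_s-1,\ldots,k_t-1)=(-1)$ for $s>t$, the four terms on the right-hand side become evaluations of $\mathcal{F}$ at all-ones tuples of smaller length, i.e. quantities of the form $\mathcal{F}(A_j) = a_{j+5}$ supplied by Corollary~\ref{cor:anbn}, together with possibly a constant term from the $\mathcal{F}(-1)$ convention. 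Collecting these and using the Padovan recursion $a_n = a_{n-2}+a_{n-3}$, I expect the right-hand side to telescope into $3a_n - a_{n-2}$; one should double-check the small base cases $n=2,3,4$ by complete enumeration (or directly from $\mathcal{F}[2,2]=3$, $\mathcal{F}[1,1,1,1]$, etc.) to anchor the induction. The computation for $\mathcal{F}[B_n] = b_{n+2}+b_n$ is analogous but uses the first equality of Theorem~\ref{thm:closed} (the case $\mathcal{F}[2,k_2,\ldots,k_r]$) specialized to $B_n=(2,2,\ldots,2)$, the Fibonacci identities $\mathcal{F}(B_j)=b_{j+3}$ from Corollary~\ref{cor:anbn}, and the Fibonacci recursion $b_n=b_{n-1}+b_{n-2}$.

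For the sharp bounds $\mathcal{F}[A_n] \le \mathcal{F}[k_0,\ldots,k_{n+1}] \le \mathcal{F}[B_n]$, the first step is Proposition~\ref{prop:reduction_closed}, which lets me assume every $k_i \in \{1,2\}$; so it suffices to compare among all $\{1,2\}$-tuples of fixed length. Then I would argue by induction on the length, using the recursions of Theorem~\ref{thm:closed} to reduce a closed-chain count to a sum of open-chain counts, and invoking the open-chain bounds $\mathcal{F}(A_j) \le \mathcal{F}(1,r_1,\ldots,r_j,1) \le \mathcal{F}(B_j)$ from Corollary~\ref{cor:anbn} termwise. The cleanest route is probably to first prove, as a lemma, that each open-chain count $\mathcal{F}(k_1,\ldots,k_m)$ is monotone nondecreasing in each argument $k_i$ (which itself follows from the first recursion of Theorem~\ref{thm:main}, since increasing $k_i$ only enlarges the arguments appearing on the right and never makes a $\mathcal{F}(\cdot)$ term vanish — the base values $\mathcal{F}(1,1)=3$, $\mathcal{F}(1)=2$, $\mathcal{F}(0)=2$ being the extremes); the closed-chain monotonicity then descends from Theorem~\ref{thm:closed} term by term. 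Once monotonicity in each coordinate is in hand, the extremes over $\{1,2\}^{n+2}$ are attained at the all-ones tuple (lower) and the all-twos tuple (upper), which by definition are $\mathcal{F}[A_n]$ and $\mathcal{F}[B_n]$.

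The main obstacle I anticipate is bookkeeping in the $\mathcal{F}[1,k_2,\ldots,k_r]$ recursion: it has \emph{four} terms, one of them with a minus sign, and the conventions $\mathcal{F}(-1)=1$, $(k_s-2)$, $(-1)$ for $s>t$ interact delicately with short tuples, so the small-length base cases (say $n \in \{2,3,4,5\}$) need careful separate verification before the induction runs cleanly. For the bounds, the subtlety is that the subtracted term $-\mathcal{F}(k_4-1,\ldots,k_{r-2}-1)$ in the length-$r$ closed recursion breaks naive termwise monotonicity; I would handle this by pairing the negative term with one of the positive terms and showing that the combination $\mathcal{F}(k_3-1,k_4,\ldots,k_{r-1}-1) - \mathcal{F}(k_4-1,k_5,\ldots,k_{r-3},k_{r-2}-1)$ (or an analogous grouping) is itself monotone, using the open-chain recursion of Theorem~\ref{thm:main} to rewrite the difference as a nonnegative combination of $\mathcal{F}$ values. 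If that pairing is awkward, an alternative is to bypass monotonicity entirely and just prove the two inequalities directly by induction, bounding each of the four terms on the right by its all-ones or all-twos counterpart and then recognizing the resulting sum as the closed formula for $\mathcal{F}[A_n]$ or $\mathcal{F}[B_n]$ via the identity already established in the first part.
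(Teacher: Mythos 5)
Your proposal follows essentially the same route as the paper, whose proof simply invokes Theorem \ref{thm:closed} together with Corollary \ref{cor:anbn}: specialize the closed-chain recursions to the all-ones and all-twos tuples to get the Padovan/Fibonacci formulas, and combine Proposition \ref{prop:reduction_closed} with the open-chain bounds of Corollary \ref{cor:anbn} for the sharp bounds. Your plan is sound (indeed, the specialization yields $3a_n-a_{n-2}$ and $b_{n+2}+b_n$ directly, with no telescoping needed), so it matches the paper's argument in substance.
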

\begin{proof}
The proof follows from Theorem \ref{thm:closed} and Corollary \ref{cor:anbn}.
\end{proof}

\begin{example}\label{eg:AOnet_closed}
We consider 
\begin{displaymath}
\begin{array}{llllll}
f_1=x_{12}\wedge x_2,& 
f_2=x_1\wedge x_3, &
f_3=x_2\wedge x_4, &
f_4=x_3\vee x_5, &
f_5=x_4\wedge x_6, &
f_6=x_5\vee x_7,\\
f_7=x_6\vee x_8, &
f_8=x_7\vee x_9, &
f_9=x_8\wedge x_{10}, &
f_{10}=x_9\wedge x_{11}, &
f_{11}=x_{10}\vee x_{12}, &
f_{12}=x_{11}\vee x_1.
\end{array}
\end{displaymath}
We will use Theorems \ref{thm:main} and  \ref{thm:closed} for the representations $[3,1,1,3,2,2]$ and $[1,3,2,2,3,1]$ of $f$.
\[
\begin{array}{lll}
\mathcal{F}[3,1,1,3,2,2] & = & \mathcal{F}[2,1,1,2,2,2] \\
& = &  \mathcal{F}(1-1,1,2,2,2-1) +\mathcal{F}(1-1,2,2-1) \\
& = &  \mathcal{F}(1,2,2,1) +\mathcal{F}(2,1) \\
& = &  \mathcal{F}(2-1,2,1)+\mathcal{F}(2-1,1) +\mathcal{F}(2,1) \\
& = &  \mathcal{F}(1,2,1)+\mathcal{F}(1,1) +\mathcal{F}(2,1) \\
& = &  \mathcal{F}(2-1,1)+\mathcal{F}(1-1)+\mathcal{F}(1,1) +\mathcal{F}(2,1) \\
& = & \mathcal{F}(1,1)+\mathcal{F}(0)+\mathcal{F}(1,1)+\mathcal{F}(2,1)\\
& = & 3+2+3 +3=11 
\end{array}
\]

\[
\begin{array}{lll}
\mathcal{F}[1,3,2,2,3,1] & = & \mathcal{F}[1,2,2,2,2,1] \\
 & = & \mathcal{F}(2-1,2,2-1) + \mathcal{F}(2-1,2,1-1) +
\mathcal{F}(2-1,2,2-1) - \mathcal{F}(2-2)\\
& = & \mathcal{F}(1,2,1) + \mathcal{F}(1,2) +
\mathcal{F}(1,2,1) - \mathcal{F}(0)\\
& = & \mathcal{F}(1,1)+\mathcal{F}(0) + \mathcal{F}(1,2) +
\mathcal{F}(1,1)+\mathcal{F}(0) - \mathcal{F}(0)\\
& = & 3+2 + 3 +3+2 - 2=11\\

\end{array}
\]

\end{example}

\section{Conclusion}

Our results provide recursive formulas and sharp bounds for the number of fixed points of AND-OR networks with chain topology. Other work regarding the number of fixed points has focused on bounds with respect to the number of nodes \citep{ADG}. Our results, on the other hand, focus on formulas and bounds with respect to the number of consecutive logical operators. Thus, our results complement previous results.

Our approach can potentially be extended to cases where an AND-OR network has a topology that can be seen as the ``combination'' of open chains. Then, the number of fixed points of the original AND-OR network will be given by the inclusion-exclusion principle in terms of the number of fixed points of the AND-OR networks with open chain topology. Indeed, Theorem \ref{thm:closed} shows how our approach can be used in such cases. 

\bibliographystyle{abbrvnat}
\bibliography{ref}
\label{sec:biblio}

\end{document}